\documentclass[reqno]{amsart}

    \usepackage{float}
    \usepackage{amsmath}
    \usepackage{graphicx}
    \usepackage{latexsym}
    \usepackage{amsfonts}
    \usepackage{amssymb}
    \usepackage{verbatim}
    \usepackage{color}
    \theoremstyle{plain}
\newtheorem{theorem}{Theorem}
\newtheorem{lemma}{Lemma}
\newtheorem{proposition}{Proposition}
\newtheorem{corollary}{Corollary}

\theoremstyle{definition}

\newtheorem{remark}{Remark}

\newtheorem{applemma}{Lemma}

\newtheorem*{remark*}{Remark}

    \makeatletter
    \@namedef{subjclassname@2020}{2020 Mathematics Subject Classification}
    \makeatother

\begin{document}
    \title[Diffusion approximation of a sequence of critical branching processes ]
    {Diffusion approximation of a sequence of critical branching processes with dependent immigration}
    %\thanks{Funded by the Deutsche Forschungsgemeinschaft (DFG, German Research Foundation) – Project-ID 317210226 – SFB 1283}
    %\subtitle{(Dedicated)}
    \author[Sharipov]{Sadillo Sharipov}
    \address{V.I.Romanovskiy Institute of Mathematics, Uzbekistan Academy of Sciences, Tashkent, Uzbekistan}
    \email{sadi.sharipov@yahoo.com}

\begin{abstract}
This paper aims to investigate a diffusion approximation of a sequence of critical branching processes with strictly stationary and ergodic immigration. Under fairly general assumptions on the dependence structure of the
immigration sequence, we establish that the scaled and properly normalized sequence
of branching processes with immigration converges in distribution to the diffusion process in the
Skorokhod topology. Our result extends the functional limit theorem of Wei and Winnicki (1989) to the case where the immigration sequence is dependent.
\end{abstract}

    % \version\vspace{1cm}

    \keywords{Branching process, immigration, diffusion process}
    \subjclass[2020]{Primary 60J80; Secondary 60F10}
    \maketitle

%%%%%%%%%%%%%%%%%%%%%%%%%%%%%%%%%%%%%%%%%%%%%%%%%%%%%%%%%%%%%%%%%%%%%%%%%%%%%%%%%%%%%%%%%%%%%%%%%%%%%%%%%%
\section{Introduction}
Let $\left(\Omega, \mathfrak{F}, \mathbf{P} \right)$ be a probability space. The random variables with which we deal are all defined on the same probability space. Let $\left\{{{\xi_{k,i}},k,i \geq 1} \right\}$ be a sequence of independent and identically
distributed (i.i.d.), non-negative and integer-valued random variables. Let $\left\{ {{\varepsilon_k},k \geq 1} \right\}$ be another sequence of non-negative and integer-valued random variables. We assume that these two sequences are mutually independent.

A sequence of branching processes with immigration $\left\{X_{k},k \geq 0\right\}$ is defined by the following recursion
\begin{equation}\label{eq1}
X_{k}=\sum\limits_{i=1}^{X_{k-1}} {{\xi_{k,i}}+{\varepsilon_k}},\ \  k \geq 1,			
\end{equation}
where we set $\sum\limits_{i=1}^{0}:=0$. Initial value $X_{0}$ is assumed a non-negative, integer-valued and square-integrable random variable that is independent of $\left\{ {{\xi_{k,i}}} \right\}$ and $\left\{ {{\varepsilon_k}} \right\}$.
Note that the independence assumption for the sequences $\{\xi_{k,i}\}$ and $\{\varepsilon_{k}\}$ means that reproduction and immigration processes are mutually independent.

Intuitively, one can interpret $\xi_{k,i}$ as the number of offsprings produced by the $i$-th individual belonging to the $(k-1)$-th generation, and $\varepsilon_k$ is the number of immigrants in the $k$-th generation. We can interpret $X_{k}$ as the number of individuals in the $k$-th generation.
Denote
$$ a:=\mathbf{E}(\xi_{1,1}), \ \   \sigma^{2}:=\operatorname{Var}\left({\xi_{1,1}}\right), \ \ \lambda:=\mathbf{E}(\varepsilon_{1}),\ \ b^{2}:=\operatorname{Var}\left({\varepsilon_{1}}\right), \ \ \rho\left(k\right):=\operatorname {Cov}\left(\varepsilon_{1}, \varepsilon_{1+k}  \right). $$

A sequence $\left\{X_{k}\right\}$ is said to be subcritical, critical, or supercritical depending on $a<1$, $a=1$, or $a>1$, respectively.
Throughout the paper, we consider the critical case.

The asymptotic behavior of the process defined by \eqref{eq1} has been investigated in a series of papers. For instance, when the immigration is an i.i.d., Seneta \cite{S70} established convergence in distribution of $n^{-1}X_{n}$ towards a gamma-distribution. Extensive research than has been performed to extend this result in various directions.
For a comprehensive discussion, we refer the reader to the recent review paper by Rahimov \cite{R21}. We focus our attention on the case where the immigration sequence is generated by dependent random variables. In \cite{N75}, Nagaev relaxed the i.i.d. assumption on the immigration sequence to the case of wide-sense stationary immigration and proved that Seneta's result remains valid. Later, Asadullin and Nagaev \cite{AN82} considered both discrete and continuous time branching processes with immigration and showed that Nagaev's result \cite{AN82} still remains true under the more general condition that there exists a random variable $\varepsilon$, such that
${n^{-1}}\mathbf{E}\left|{\sum\limits_{k=1}^n {\left({{\varepsilon_k}-\varepsilon }\right)}}\right| \to 0$  as $n \to \infty$. It is worth noting that neither dependence structure nor restriction on the distribution of immigration sequence was imposed in \cite{AN82}. Badalbaev and Zubkov \cite{BZ83} established a limit theorem for a sequence of special random processes, including branching processes with immigration. Their theorem contains the results of \cite{N75} and \cite{AN82} as special cases. Note that the convergence of finite-dimensional distributions of the sequence of processes (1)
was studied by Kawazu and Watanabe \cite{KW71} and Aliev \cite{A85}. We would like also to stress that Guo and Zhang \cite{GZ14} extended one of the functional limit theorems of \cite{R07} to the case where there exists $N\geq 2$ such that $\varepsilon_{i}$ and $\varepsilon_{j}$ are independent whenever $\left|i-j \right|> N$. This result was extended in \cite{S24} to the case where the immigration sequence satisfies $\rho$-mixing condition. We also refer the reader to the papers \cite{GS20}, \cite{Kh17}, \cite{LZ20}, \cite{RS24} and \cite{S23}, where functional limit theorems were proved under various dependence condition on immigration process.

However, all of the above mentioned papers do not consider the situation when scaled and properly normalized process \eqref{eq1} converge in distribution to the diffusion process under the assumption that the immigration sequence is weakly dependent. The purpose of this paper is to fill this gap by establishing a functional limit theorem for the critical process \eqref{eq1}, under the assumption that the immigration sequence is strictly stationary, ergodic and satisfies a dependence condition that covers martingale sequences, mixingales and various mixing conditions. Our motivation comes from the celebrated result of Wei and Winnicki \cite{WW89}, who established that the scaled and properly normalized process \eqref{eq1} converges weakly in the Skorokhod topology to a non-negative diffusion process, provided that the immigration sequence $\{\varepsilon_k\}$ is i.i.d. %It is natural to ask whether the result of Wei and Winnicki remains valid under weak dependence assumptions on the immigration sequence.
It is worth noting that the method used in Wei and Winnicki's proof is based on operator semigroup convergence theorems and strongly relies on the Markov property of the process \eqref{eq1}. This method was applied by Ethier and Kurtz \cite[Theorem~9.1.3]{EK86} to derive a functional limit theorem for branching processes without immigration. However, this method is not appropriate in the context of dependent immigration, since process \eqref{eq1} is no longer a Markov chain for the dependent immigration sequence. Therefore, in the proofs we follow the method introduced by Isp\'any and Pap \cite{IP10}. Namely, the basic idea is to apply a limit theorem for random step processes towards a diffusion process. This method was used in series of papers \cite{IP14}, \cite{BBP21}, \cite{GMP23} and \cite{BGMP24} to derive some fluctuation limit theorems for process \eqref{eq1} and some models of branching processes as well. A detailed alternative proof was demonstrated by Barczy et al. \cite{BBP21}, who derived the result of Wei and Winnicki using limit theorems for random step processes
toward a diffusion process. Their proof serves as a useful basis for our analysis.

The paper is organized as follows. In Section 2 we give the main result. Section 3
contains some preliminary lemmas and their proofs. Section 4 contains the proofs. For ease of reading, we include in the Appendix a functional limit theorem for sequences of martingale differences due to Isp\'any and Pap \cite{IP10}. Furthermore, we recall a version of the continuous mapping theorem and maximal $\mathbf{L}_{p}$-maximal inequality for strictly stationary process. %For the ease of reading the paper, preliminary results are presented in the Appendix.

%%%%%%%%%%%%%%%%%%%%%%%%%%%%%%%%%%%%%%%%%%%%%%%%%%%%%%%%%%%%%%%%%%%%%%%%%%%%%%%%%%%%%%%%%%%%%%%%%%%%%%%%%%
\section{Fluctuation limit theorem}
In order to present our results, let us first introduce dependence condition used throughout the paper.

To formulate dependence assumptions for the immigration sequence, it is convenient to consider a
two-sided strictly stationary ergodic extension of $\left\{\varepsilon_{k}, k\ge 1\right\}$. More
precisely, we assume that there exists a strictly stationary ergodic sequence
$ \left\{\widetilde{\varepsilon}_{k}, k \in \mathbb{Z}\right\}$
such that
$$
(\widetilde{\varepsilon}_1, \widetilde{\varepsilon}_2,\dots)
\stackrel{\mathcal{D}}{=}
(\varepsilon_1,\varepsilon_2,\dots),
$$
where $\stackrel{\mathcal{D}}{=}$ denotes equality in distribution.

We then define the past sigma-algebras
$$
\mathcal{H}_{k}:=\sigma(\widetilde{\varepsilon}_{j}, j\le k),
\ \ k \in \mathbb{Z}.
$$

Thus, the two-sided extension is used only for the immigration sequence in order to
define stationary past sigma-algebras such as $\mathcal{H}_{0}$ and to impose dependence
conditions involving conditional expectations with respect to $\mathcal{H}_{0}$.
%It does
%not alter the one-sided branching process itself, and the filtration \((\mathcal G_k)\)
%remains the filtration used in the proof of the functional limit theorem.

The stationary process considered throughout the paper is rigorously defined as follows.
Let $\mathbb{T}: \Omega \mapsto \Omega$ be a bijective bi-measurable transformation preserving the probability $\mathbf{P}$. Let $\mathcal H_{0}$ be a $\sigma$-algebra of $\mathfrak{F}$ satisfying $\mathcal H_{0} \subseteq \mathbb{T}^{-1}\left(\mathcal{H}_0\right)$. We define the nondecreasing filtration $\left\{\mathcal{H}_{i}, i \in \mathbb{Z}\right\}$ by $\mathcal{H}_{i}=\mathbb{T}^{-i}\left(\mathcal{H}_0\right)$ and the stationary sequence $\left\{\varepsilon_{i}, i \in \mathbb{Z}\right\}$ by $\varepsilon_{i}=\varepsilon_0 \circ \mathbb{T}^{i}$, where $\varepsilon_{0}$ is a real-valued random variable. The sequence will be called adapted to the filtration $\left\{\mathcal H_{i}, i \in \mathbb{Z}\right\}$ if $\varepsilon_{0}$ is $\mathcal H_{0}$-measurable.
Denote $\zeta_{i}:=\varepsilon_{i}-\lambda$, $i\geq 1$ and set $S_{n}=\sum_{i=1}^{n}\zeta_{i}$.

Consider the following two quantities:
$$ \delta_{n,2}=\sum_{k=1}^{n}k^{-3/2}\left\| \mathbf{E}\left(S_{k}| \mathcal H_{0} \right) \right\|_{2}, \ \ \delta_{\infty,2}=\sum_{k=1}^{\infty}k^{-3/2}\| \mathbf{E}\left(S_{k}| \mathcal H_{0} \right) \|_{2}. $$
Maxwell and Woodroofe \cite{MW00} showed that the condition $\delta_{\infty,2}<\infty$ guarantees the central limit theorem. In the literature, condition $\delta_{\infty, 2}<\infty$ is referred as Maxwell-Woodroofe condition. Later, Peligrad and Utev \cite{PU05} obtained an $\mathbf{L}_{2}$-maximal inequality and used it to prove the invariance principle for partial sums of strictly stationary processes.
%In the rest of this section the sequence: $\left(X_i\right)_{i \in Z}$ is assumed to be stationary and adapted to $\left(\mathcal{F}_i\right)_{i \in Z}$ and the variables are $\ln \mathbf{L}^p$.

For each $n \geq 1$, we introduce the random process $Y_{n}=\left\{Y_{n}\left(t\right), t\geq 0\right\}$, defined by
$$ Y_{n}\left(t\right):=\frac{X_{[nt]}}{n},\ \ t \geq 0, $$
where $\left[\cdot \right]$ denotes the integer part.
Throughout the paper, the symbols $\mathop{\to}\limits^{\mathbf{P}}$, $\mathop{\to}\limits^{\mathbf{L^{2}}}$, $\mathop{\to}\limits^{{d}}$ and $\mathop{\to}\limits^{\mathcal{D}}$ denote convergence in probability, convergence in mean square, convergence in distribution and the weak convergence of random functions in the space $D[0,\infty)$ with the Skorokhod topology, respectively. In the sequel, $\|\cdot \|_{p}$ denotes the usual $\mathbf{L}_{p}$ norm.
%In this paper, we shall study the fluctuation limit of $\left\{X_{k}, k \geq 0\right\}$ defined by (1) under the following condition:

%(C): The immigration sequence $\left\{\varepsilon_{k}, k \geq 1 \right\}$ is a stationary in wide sense, i.e. $\mathbb{E}\varepsilon_{1}=\lambda$, and covariance function $\rho\left(k\right):=\operatorname {cov}\left(\varepsilon_{i+k}, \varepsilon_{i}  \right)$ depends only on $i-k$.

We are ready to formulate our main result.
\begin{theorem}\label{thm1}
Let $\left\{X_{k}, k \geq 0 \right\}$ be a critical branching process with immigration with $\lambda \in \left(0, \infty\right)$, $b^{2}\in \left(0, \infty\right)$ and $\sigma^{2}\in \left(0,\infty\right)$.
Assume that the immigration sequence $\left\{{{\varepsilon_k},k\geq 1} \right\}$ is a strictly stationary and ergodic sequence adapted to a stationary non-decreasing filtration $\left\{\mathcal{H}_{i}, i \in \mathbb{Z}\right\}$ such that $\delta_{n,2}=o\left(\sqrt{n}\right)$ as $n \to \infty$.

Then
\begin{equation}\label{eq1*}
 Y_{n} \mathop{\to}\limits^{\mathcal{D}} Y, \qquad n\to \infty,
\end{equation}
where the limit process $Y=\left\{Y\left(t\right), t \geq 0 \right\}$ is the pathwise unique strong solution of the stochastic differential equation (SDE)
\begin{equation}\label{eq2}
 \mathrm{d} Y\left(t\right) = \lambda \mathrm{d}t+\sigma \sqrt{Y\left(t\right)^{+}} \mathrm{d} W\left(t\right)
\end{equation}
with initial condition $Y(0)=0$, where $\left\{W\left(t\right), t \geq 0 \right\}$ is a standard Wiener process.
\end{theorem}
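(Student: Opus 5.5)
The plan is to follow the Ispány--Pap approach as implemented by Barczy et al. for i.i.d. immigration, and to treat the dependent immigration as a perturbation that is negligible on the diffusive scale. Let $\mathcal{F}_k:=\sigma(X_0,\xi_{j,i},\varepsilon_j,\; j\le k,\, i\ge1)$. Using $a=1$, write
$$X_k = X_{k-1} + \lambda + M_k + \zeta_k,\qquad M_k:=\sum_{i=1}^{X_{k-1}}(\xi_{k,i}-1).$$
Because the offspring variables are independent of the immigration, $\{M_k\}$ is a martingale difference sequence with respect to $\{\mathcal{F}_k\}$, and $\mathbf{E}(M_k^2\mid\mathcal{F}_{k-1})=\sigma^2X_{k-1}$. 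Summing gives
$$X_k = X_0 + k\lambda + \sum_{j\le k}M_j + S_k .$$
I would therefore split $Y_n(t)=Z_n(t)+n^{-1}S_{[nt]}$, where $Z_n(t):=n^{-1}\bigl(X_0+[nt]\lambda+\sum_{j\le[nt]}M_j\bigr)$. The idea is to show that the immigration fluctuation term vanishes uniformly on compacts and that $Z_n$ is handled by the martingale functional limit theorem from the Appendix.

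\textbf{Step 1 (immigration term).} I would show $\sup_{t\le T}n^{-1}|S_{[nt]}|\to0$ in probability. The Peligrad--Utev $\mathbf{L}_2$-maximal inequality recalled in the Appendix gives
$$\bigl\|\max_{k\le m}|S_k|\bigr\|_2\le C\sqrt m\,\bigl(\|\zeta_1\|_2+\delta_{m,2}\bigr).$$
With $m=[nT]$ and the hypothesis $\delta_{m,2}=o(\sqrt m)$, the right-hand side is $o(m)=o(n)$, which settles this step. Alternatively, the ergodic theorem alone gives $S_k/k\to0$ a.s.; the maximal inequality is needed mainly for uniform moment bounds later. The same inequality yields $\mathbf{E}X_k\le C(1+k)$ and, in Step 2, $\mathbf{E}X_k^2\le C(1+k^2)$. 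For the second moment one expands $\mathbf{E}X_k^2$, where the cross terms $\mathbf{E}(M_jS_k)$ vanish by independence and the martingale property, and uses $\|S_k\|_2=O(k)$.

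\textbf{Step 2 (martingale part).} I would apply the Ispány--Pap theorem to $U_k^{(n)}:=n^{-1}M_k$, with candidate limit SDE $d\mathcal M=\sigma\sqrt{(\lambda t+\mathcal M)^+}\,dW$, so that $Y=\lambda t+\mathcal M$. Three conditions have to be checked.
\begin{itemize}
\item[(i)] Conditional variance: $\sum_{k\le[nt]}\mathbf{E}((U_k^{(n)})^2\mid\mathcal{F}_{k-1})=n^{-2}\sigma^2\sum_{k\le[nt]}X_{k-1}$. This must be close to $\int_0^t\sigma^2\bigl(\lambda s+\mathcal M^{(n)}(s)\bigr)^+ds$, uniformly in $t\le T$ in probability. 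The difference is $\int_0^t n^{-1}S_{[ns]}\,ds$ plus $O(1/n)$ terms, and it vanishes by Step 1.
\item[(ii)] Conditional Lindeberg condition: $\sum_k\mathbf{E}\bigl((U_k^{(n)})^2\mathbf 1\{|U_k^{(n)}|>\theta\}\bigr)\to0$. I would bound this via the fourth-moment-free argument of Barczy et al., namely truncation and $\mathbf{E}X_{k-1}\le Ck$, together with square integrability of $\xi$ only. Conditioning on $X_{k-1}$, a sum of $X_{k-1}$ i.i.d. centered terms has $\mathbf{E}(\cdot)^2\mathbf 1\{\cdot>n\theta\}$ that is $o(X_{k-1})$ uniformly when $X_{k-1}\le \eta n^2$.
\item[(iii)] Uniqueness in law of the limit SDE: this holds because of the Yamada--Watanabe pathwise uniqueness for square-root coefficients.
\end{itemize}
This yields $Z_n\to Y$ in $D[0,\infty)$, since $X_0/n\to0$.

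\textbf{Step 3 (combining).} Since $n^{-1}S_{[n\cdot]}\to0$ uniformly on compacts in probability, Slutsky's lemma in the Skorokhod space gives $Y_n\to Y$. The main obstacle I expect is condition (i). The Ispány--Pap theorem demands that the conditional variance be expressed as a functional of the approximating martingale itself, up to a uniformly negligible error, and the dependent immigration enters exactly through $n^{-2}\sum X_{k-1}$. Controlling the error requires the uniform-in-$t$ bound of Step 1, in $L_1$ or in probability, and care that the continuous-mapping version in the Appendix tolerates a random perturbation of the coefficient.
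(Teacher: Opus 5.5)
Your proposal is correct, and it takes a genuinely different and leaner route than the paper.

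The paper uses the full Doob decomposition of $X$ with respect to $\mathfrak{F}_k$. Its martingale difference is $M_k=T_k+N_k$ with $N_k=\varepsilon_k-\eta_k$ and $\eta_k=\mathbf{E}(\varepsilon_k\mid\mathfrak{F}_{k-1})$, so the compensator $A_n(t)=n^{-1}\sum_{k\le[nt]}\eta_k$ is random. That choice obliges the paper to:
\begin{enumerate}
\item prove that $\{\eta_k\}$ is stationary and ergodic (Lemma 1), in order to get $A_n\to\lambda t$ uniformly (Lemma 2);
\item show $n^{-2}\sum_{k\le nT}\nu_k\to0$ inside the conditional-variance condition;
\item treat $N_k$ and the mixed term $T_k^2\mathbf{1}\{|N_k|>\varepsilon n/2\}$ in the Lindeberg condition;
\item add the random drift back through joint convergence of $(Y_n^{(1)},A_n)$.
\end{enumerate}

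You instead take only the offspring noise $T_k$ as the martingale, with deterministic drift $\lambda$, and put the entire immigration fluctuation into $n^{-1}S_{[nt]}$. Dependence then enters only through $X_{k-1}$ in condition (b), where $\max_{k\le nT}|S_k|=o_{\mathbf P}(n)$ is all that is needed. The Lindeberg condition involves only sums of i.i.d.\ offspring variables, so none of the $\eta_k$, $\nu_k$ machinery is required.

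As you note, Step 1 follows from Birkhoff's theorem alone. Your second-moment bound only needs $\|S_k\|_2\le k\|\zeta_1\|_2$. So neither the Peligrad--Utev inequality nor the hypothesis $\delta_{n,2}=o(\sqrt n)$ is actually used. The latter is in fact automatic under ergodicity: $\|\mathbf{E}(S_k\mid\mathcal{H}_0)\|_2\le\|S_k\|_2=o(k)$ by the mean ergodic theorem, and a Ces\`aro argument then gives $\delta_{n,2}=o(\sqrt n)$. The paper's route mirrors the Barczy et al.\ template more literally, at the price of the extra verifications listed above.

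Two minor points. First, your worry about a random perturbation of the coefficient is unnecessary: condition (b) of Theorem A.1 only asks that the discrepancy vanish uniformly in probability, which is exactly what you check. Second, the boundary term $\sigma^2n^{-2}(nt-[nt])X_{[nt]}$ is $O_{\mathbf P}(1/n)$ only once you note $\sup_{t\le T}X_{[nt]}/n=O_{\mathbf P}(1)$. This follows from Doob's inequality for $\sum_{j}M_j$ together with Step 1.
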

The limit process $\left\{Y(t), t \geq 0\right\}$ is called a squared Bessel process. It is known (see \cite{IW89}) that the SDE \eqref{eq2} admits a pathwise unique solution $\{Y^{(x)}(t), t \geq 0\}$
for each initial condition $Y^{(x)}(0)=x \in \mathbb{R}$. Furthermore, for
any $x\geq 0$, the corresponding solution
remains nonnegative almost surely, that is,
$Y^{(x)}(t)\geq 0$ a.s. for all $t\geq 0$. Thus, we can replace $Y(t)^{+}$ by $Y(t)$ under the square root in \eqref{eq2}.
\begin{remark}
Note that the statement of Theorem~\ref{thm1} remains valid if we replace condition $\delta_{n,2}=o(\sqrt{n})$ as $n \to \infty$ by the Maxwell-Woodroofe condition. This condition holds for a large class of stationary processes with dependence structure. For instance, according to Lemma 1 in \cite{PUW07}, the quantity $\delta_{\infty,2}$ is finite if the immigration sequence satisfies the so-called $\rho$-mixing condition with logarithmic decay of correlation. Moreover, the finiteness of $ \delta_{\infty,2}$ follows from the fact that the immigration sequence is a mixingale.
We also note that $\delta_{n,2}=0$ whenever the immigration sequence is a stationary martingale difference sequence.
\end{remark}
\begin{remark}
In the proof, we deal with the filtration
$$
\mathfrak{F}_{k}
:=
\sigma\left\{X_{0},(\xi_{\ell,i},\varepsilon_{\ell}), 1\le \ell\le k,\ i\ge 1\right\},
\qquad k\ge 1,
$$
rather than with the natural filtration of the branching process
$$
\mathcal{F}_{k}
:=
\sigma\left\{X_{0}, X_{1},\dots, X_{k}\right\},
\qquad k \ge 0.
$$
This choice is motivated by the dependent structure of immigration sequence. Indeed, since the
offspring family $\{\xi_{k,i}\}$ is independent of the immigration sequence
$\{\varepsilon_{k}\}$, we obtain
$$
\mathbf{E}\left(\varepsilon_{k}\mid \mathfrak{F}_{k-1}\right)
=
\mathbf{E}\left(\varepsilon_{k}\mid \sigma(\varepsilon_{1},\dots, \varepsilon_{k-1})\right),
\ \ k \ge 1.
$$
Hence, the predictor of $\varepsilon_{k}$ with respect to filtration $\mathfrak{F}_k$ is determined
only by the past of the immigration process which preserves the stationary and ergodic
structure needed in the proof.

By contrast, under the natural filtration $\mathcal{F}_{k}$, one has to deal with random variable
$$
\mathbf{E}(\varepsilon_{k}\mid \mathcal F_{k-1})
=
\mathbf{E}(\varepsilon_{k}\mid X_0,\dots, X_{k-1}),
$$
which is much less tractable since it is not
stationary. Therefore, the sequence
$
\left\{\mathbf{E}(\varepsilon_{k}\mid \mathcal{F}_{k-1}), k \ge 1\right\}
$
does not, in general, admit a transparent stationary structure.
We also note that, in general,
$$
\mathcal{F}_{k}\subsetneq \mathfrak{F}_{k}.
$$
Indeed, each $X_{j}$, $0\le j\le k$, is measurable with respect to $\mathfrak{F}_{k}$, and
hence $\mathcal{F}_{k}\subset \mathfrak{F}_{k}$. However, the reverse inclusion fails,
because $\mathfrak{F}_{k}$ contains the individual variables, whereas
$\mathcal{F}_{k}$ contains only the observed population sizes.
\end{remark}

%For example, if \(X_0=1\), then
%\[
%X_1=\xi_{1,1}+\varepsilon_{1},
%\]
%hence
%%\mathcal{F}_{1}^{X}=\sigma(X_1)=\sigma(\xi_{1,1}+\varepsilon_1),
%\]
%whereas
%\[
%\mathcal{G}_{1}
%=
%\sigma(\xi_{1,1},\varepsilon_1,\xi_{1,2},\xi_{1,3},\dots).
%\]
%Thus $\mathcal{F}_{1}^{X}$ contains only the sum \(\xi_{1,1}+\varepsilon_1\), while
%$\mathcal{G}_{1}$ contains the individual %innovation variables, so these filtrations do not
%coincide in general.
The total progeny is one of the important functionals associated with the process.
More generally, we consider sums of the form
$$
\sum_{k=0}^{n} f\!\left(Y_{n}\!\left(\frac{k}{n}\right)\right),
$$
where $f:[0,\infty)\to\mathbb{R}$ is a continuous function. Such functionals arise naturally in the derivation of asymptotic distributions of certain estimators.

Observe that
$$
\frac{1}{n}\sum_{k=0}^{n} f\!\left(Y_n\!\left(\frac{k}{n}\right)\right)
=
\frac{1}{n}f(Y_n(0))
+
\sum_{k=1}^{n}\int_{(k-1)/n}^{k/n}
f\!\left(Y_n\!\left(\frac{k}{n}\right)\right)\,dt.
$$
Define the functionals $\Psi_{n}:D[0,\infty) \to \mathbb{R}$ by
$$
\Psi_{n}(x)
:=
\frac{1}{n}\sum_{k=1}^{n} f\!\left(x\!\left(\frac{k}{n}\right)\right),
\qquad x \in D[0,\infty),
$$
and set
$$
\Psi(x):=\int_{0}^{1} f(x(t))\,\mathrm{d}t,
\qquad x \in D[0,\infty).
$$

Let $x_{n}, x \in D[0,\infty)$ be such that
$
\sup_{t\in[0,1]}|x_{n}(t)-x(t)| \to 0
$
and $x$ is continuous. Then
$$
|\Psi_{n}(x_{n})-\Psi(x)|\to 0,
\qquad n\to\infty.
$$
Indeed, one can verify that
$$
|\Psi_{n}(x_{n})-\Psi(x)|$$
$$ \le
\frac{1}{n}\sum_{k=1}^{n}
\left|
f\!\left(x_{n}\!\left(\frac{k}{n}\right)\right)
-
f\!\left(x\!\left(\frac{k}{n}\right)\right)
\right|
+
\left|
\frac{1}{n}\sum_{k=1}^{n} f\!\left(x\!\left(\frac{k}{n}\right)\right)
-
\int_{0}^{1} f(x(t))\,\mathrm{d}t
\right|.
$$
Since $x$ is continuous on $[0,1]$, then it is bounded and for all sufficiently large $n$,
the functions $x_n$ are uniformly bounded on $[0,1]$. Hence, by continuity of $f$, the first
term of above inequality vanishes, while the second one tends to zero by the Riemann sum theorem.

Due to the fact that the limit process $Y$ is driven by a Wiener process, it has continuous sample paths almost surely (a.s.). Hence,
by the extended continuous mapping theorem; see Lemma A.1 from Appendix,
$$
\Psi_{n}(Y_{n}) \xrightarrow[]{\mathcal{D}} \Psi(Y),
\qquad n \to \infty.
$$
Moreover,
$$
\frac{1}{n} f(Y_{n}(0))\xrightarrow[]{\mathbf{P}}0,
\qquad n\to\infty.
$$
Consequently, we have proved the following corollary.
\begin{corollary}\label{cor1}
Under the assumptions of Theorem~\ref{thm1}, for every continuous function
$f:[0,\infty)\to\mathbb R$,
$$
\frac{1}{n}\sum_{k=0}^{n} f\!\left(Y_n\!\left(\frac{k}{n}\right)\right)
\xrightarrow[]{\mathcal{D}}
\int_{0}^{1} f(Y(t))\,\mathrm{d}t,
\qquad n\to\infty.
$$
\end{corollary}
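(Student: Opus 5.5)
The corollary follows from Theorem~\ref{thm1} by a continuous-mapping argument, so the plan is to formalize the sketch given just before the statement. First split
$$
\frac{1}{n}\sum_{k=0}^{n} f\!\left(Y_n\!\left(\frac{k}{n}\right)\right)=\frac{1}{n}f(Y_n(0))+\Psi_n(Y_n).
$$
We then show that the first term tends to zero in probability and that $\Psi_n(Y_n)\xrightarrow[]{\mathcal D}\Psi(Y)$. Slutsky's lemma then gives the claim. For the first term, $Y_n(0)=X_0/n\to 0$ almost surely, because $X_0$ is a finite random variable. By continuity of $f$ at $0$, the quantity $f(Y_n(0))$ stays bounded in probability, so $n^{-1}f(Y_n(0))\xrightarrow[]{\mathbf P}0$.

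For the main term, we invoke the extended continuous mapping theorem (Lemma A.1 in the Appendix). It requires a set $C\subset D[0,\infty)$ with $\mathbf P(Y\in C)=1$ such that $x_n\to x$ in the Skorokhod topology with $x\in C$ implies $\Psi_n(x_n)\to\Psi(x)$. Take $C=C[0,\infty)$. The limit $Y$ is the strong solution of \eqref{eq2} driven by a Wiener process, so it has continuous paths almost surely and $\mathbf P(Y\in C)=1$.

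The key step is the deterministic convergence. Skorokhod convergence to a continuous limit is equivalent to locally uniform convergence, so $\sup_{t\in[0,1]}|x_n(t)-x(t)|\to0$.

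1. Since $x$ is continuous, $M:=\sup_{[0,1]}|x|<\infty$. For large $n$, every $x_n(k/n)$ with $1\le k\le n$ lies in the compact set $K=[-M-1,M+1]$.
2. The function $f$ is uniformly continuous on $K$, and $\max_k|x_n(k/n)-x(k/n)|\to0$. Hence $n^{-1}\sum_{k=1}^n|f(x_n(k/n))-f(x(k/n))|\le\max_k|f(x_n(k/n))-f(x(k/n))|\to0$.
3. The function $t\mapsto f(x(t))$ is continuous on $[0,1]$, so the right Riemann sums $n^{-1}\sum_{k=1}^n f(x(k/n))$ converge to $\int_0^1 f(x(t))\,dt$.

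Combining these gives $\Psi_n(x_n)\to\Psi(x)$. One small point: $f$ is only defined on $[0,\infty)$. This causes no trouble, because $Y_n\ge0$ and $Y\ge0$ almost surely, so we may restrict $C$ and the $x_n$ to nonnegative paths, or extend $f$ continuously by $f(0)$ on the negative axis.

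Nothing here is a genuine obstacle. The only care needed is to verify the hypotheses of Lemma A.1 exactly as stated: measurability of $\Psi_n$ and $\Psi$ (both are finite sums of evaluations, or integrals, of $f\circ x$), and the use of Skorokhod convergence rather than uniform convergence, which is resolved by the continuity of the limit.
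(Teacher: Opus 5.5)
Your proposal is correct and follows the paper's argument essentially step for step: split off $n^{-1}f(Y_n(0))$, prove the deterministic convergence $\Psi_n(x_n)\to\Psi(x)$ for continuous limits $x$, apply the extended continuous mapping theorem (Lemma A.1) with $C=C[0,\infty)$, and conclude with Slutsky's lemma. Your additions make explicit details the paper leaves implicit: passing from Skorokhod to locally uniform convergence at a continuous limit, using uniform continuity of $f$ on a compact set, and extending $f$ to negative arguments, e.g.\ via $x\mapsto f(x^{+})$.
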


In particular, taking $f(x)=x^{\theta}$, $\theta \geq 0$, in Corollary~1, we obtain the following result.
\begin{corollary}\label{cor2}
Under the conditions of Theorem~\ref{thm1}, we have for each
$\theta \geq 1$,
$$
\frac{1}{n^{\theta+1}}\sum_{i=0}^{n}X_{i}^{\theta} \xrightarrow[]{\mathcal{D}} \int_{0}^{1} Y^{\theta}(t)\,\mathrm{d}t, \ \ n \to \infty.
$$
\end{corollary}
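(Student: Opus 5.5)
Corollary~\ref{cor2} follows from Corollary~\ref{cor1} by taking the single test function $f(x)=x^{\theta}$ on $[0,\infty)$. For $\theta\ge 1$ (indeed for any $\theta\ge 0$, using $0^{0}=1$) this function is continuous on $[0,\infty)$. So Corollary~\ref{cor1} applies directly and gives
$$
\frac{1}{n}\sum_{k=0}^{n}\Bigl(Y_n\bigl(\tfrac{k}{n}\bigr)\Bigr)^{\theta}\xrightarrow[]{\mathcal{D}}\int_{0}^{1}Y^{\theta}(t)\,\mathrm{d}t,\qquad n\to\infty .
$$

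The remaining step is an exact rewriting of the left-hand side. By definition $Y_n(k/n)=X_{[n\cdot k/n]}/n=X_k/n$ for integers $0\le k\le n$. Hence
$$
\frac{1}{n}\sum_{k=0}^{n}\Bigl(\frac{X_k}{n}\Bigr)^{\theta}=\frac{1}{n^{\theta+1}}\sum_{k=0}^{n}X_k^{\theta}.
$$
This is an identity for every $n$, so the two sides have the same distribution and the convergence in distribution transfers verbatim.

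It remains to note that the limit is a genuine real random variable. Because $Y$ is nonnegative with a.s. continuous paths, $t\mapsto Y^{\theta}(t)$ is a.s. continuous on $[0,1]$, so $\int_0^1 Y^{\theta}(t)\,\mathrm{d}t$ is an a.s. finite random variable. No step here is a real obstacle: all the analytic work, namely the extended continuous mapping argument behind Corollary~\ref{cor1} and Theorem~\ref{thm1} itself, has already been done. The only point to check is the continuity of $f$ at the origin, which holds since $\theta\ge 1>0$.
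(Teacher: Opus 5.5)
Your proposal is correct and follows the paper's own argument: the paper obtains Corollary~\ref{cor2} by taking $f(x)=x^{\theta}$ in Corollary~\ref{cor1}. The identity $Y_n(k/n)=X_k/n$, which rewrites $\frac{1}{n}\sum_{k=0}^{n}(X_k/n)^{\theta}$ as $n^{-\theta-1}\sum_{k=0}^{n}X_k^{\theta}$, is the one step the paper leaves implicit, and you have supplied it correctly.
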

 %%%%%%%%%%%%%%%%%%%%%%%%%%%%%%%%%%%%%%%%%%%%%%%%%%%%%%%%%%%%%%%%%%%%%%%%%%%%%%%%%%%%%%%%%%%%%%%%%%%%%%%%%%

\section{Auxiliary results}

For each $k\geq 0$, let us define the filtration $\mathfrak{F}_{k}:=\sigma \left\{X_{0},\,(\xi_{\ell,i},\varepsilon_{\ell}): \ell\le k,\ i\ge 1\right\}$ with $\mathfrak{F}_{0}:=\sigma \left\{X_{0}\right\}$. For $k \geq 1$, we denote
$$
\eta_{k}:=\mathbf{E}(\varepsilon_{k}\mid \mathfrak F_{k-1}), \ \ \nu_{k}:=\operatorname{Var}(\varepsilon_{k}\mid \mathfrak{F}_{k-1}).
$$
It is clear that the sequence $\left\{M_{k},k\ge 1\right\}$ for each $k\geq 1$, defined as
\begin{equation}\label{eq2*}
 M_{k}:=X_{k}-{\mathbf{E}} \left(X_{k} \left|\mathfrak{F}_{k-1} \right. \right)=X_{k}- X_{k-1}-\eta_{k}
\end{equation}
is a martingale difference sequence with respect to the filtration $\left\{\mathfrak{F}_{k}, k\geq 0\right\}$.

Thus,
$$ M_{k}=T_{k}+N_{k}, $$
where
\begin{equation}\label{eq3}
T_{k}: =\sum_{j=1}^{X_{k-1}}\left(\xi_{k,j}-1 \right), \ \  N_{k}:=\varepsilon_{k}-\eta_{k}.
\end{equation}
%Hence, $M_{n}$ can be represented as follows:
%\begin{equation}\label{eq7}
%M_{n}=\eta_{n}+\zeta_{n},
%\end{equation}
%where
%$$
%\eta_{n}=\sum_{j=1}^{X_{n-1}}\left(\xi_{n,j}-1\right), \ \  \zeta_{n}=\varepsilon_{n}-{\mathbf{E}} \left(\varepsilon_{n} \left|\Im_{n-1} \right. \right).
%$$
In view of \eqref{eq3}, we obtain
\begin{equation}\label{eq4}
 X_{n}=X_{0}+\sum_{k=1}^{n}M_{k}-\sum_{k=1}^{n}N_{k}+\sum_{k=1}^{n}\zeta_{k}.
\end{equation}
Taking into account \eqref{eq4}, we rewrite $Y_{n} \left(t\right)$ as
$$ Y_{n} \left(t\right)=Y_{n}^{\left(1\right)} \left(t\right)-Y_{n}^{\left(2\right)} \left(t\right)+Y_{n}^{\left(3\right)} \left(t\right),$$
where
$$ Y_{n}^{\left(1\right)}\left(t\right):=\frac{1}{n} \left(X_{0}+\sum_{k=1}^{\left[nt\right]}M_{k}\right), \ \   Y_{n}^{\left(2\right)} \left(t\right):=\frac{1}{n} \sum_{k=1}^{\left[nt\right]}N_{k}, \ \
 Y_{n}^{\left(3\right)}\left(t\right):=\frac{1}{n} \sum_{k=1}^{\left[nt\right]}\zeta_{k}.$$
The following lemma plays an important role in the subsequent arguments and is also of independent interest.
\begin{lemma}\label{lem1}
Let $\left\{\varepsilon_{k},k\in\mathbb Z\right\}$ be a sequence of strictly stationary and ergodic random variables that is independent of $\left\{\xi_{k,i}\right\}$. Then the sequence $\left\{\eta_k, k\in\mathbb Z\right\}$ is also strictly stationary and ergodic.
\end{lemma}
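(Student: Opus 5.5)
The plan is to reduce $\eta_k$ to a measurable function of the past of the immigration sequence alone, and then invoke the standard fact that a measurable (time-invariant) functional of a stationary ergodic sequence is again stationary and ergodic.

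\emph{Step 1: conditioning only on the past immigration.} Since $\mathfrak F_{k-1}=\sigma(X_0)\vee\sigma(\xi_{\ell,i}:\ell\le k-1)\vee\sigma(\varepsilon_1,\dots,\varepsilon_{k-1})$, and $X_0$ and $\{\xi_{\ell,i}\}$ are jointly independent of $\{\varepsilon_\ell\}$, the $\sigma$-algebra $\sigma(X_0,\xi_{\ell,i})$ is independent of $\sigma(\varepsilon_k,\varepsilon_1,\dots,\varepsilon_{k-1})$. The standard lemma on conditioning with an independent enlargement (if $\mathcal G$ is independent of $\sigma(Z)\vee\mathcal A$, then $\mathbf E(Z\mid\mathcal A\vee\mathcal G)=\mathbf E(Z\mid\mathcal A)$) then gives $\eta_k=\mathbf E(\varepsilon_k\mid\varepsilon_1,\dots,\varepsilon_{k-1})$, as already noted in the Remark. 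To get a time-invariant description, work on the two-sided version $\widetilde\varepsilon$ with $\mathcal H_{k-1}$ and set $\eta_k:=\mathbf E(\varepsilon_k\mid\mathcal H_{k-1})$. This is the version for which indexing over $k\in\mathbb Z$ makes sense, and it is presumably what the statement intends.

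\emph{Step 2: representation via the shift.} Let $g$ be a Borel function on $\mathbb R^{\mathbb N}$ with
\[
\mathbf E(\varepsilon_0\mid\mathcal H_{-1})=g(\varepsilon_{-1},\varepsilon_{-2},\dots)\quad\text{a.s.}
\]
Because $\varepsilon_i=\varepsilon_0\circ\mathbb T^i$, $\mathcal H_i=\mathbb T^{-i}\mathcal H_0$, and $\mathbb T$ preserves $\mathbf P$, conditional expectation commutes with the shift: $\mathbf E(Z\circ\mathbb T^k\mid\mathbb T^{-k}\mathcal A)=\mathbf E(Z\mid\mathcal A)\circ\mathbb T^k$. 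I would verify this by checking the defining integral identity over sets $\mathbb T^{-k}A$ with $A\in\mathcal A$, using measure preservation. Hence
\[
\eta_k=g(\varepsilon_{k-1},\varepsilon_{k-2},\dots)=\eta_0\circ\mathbb T^k
\]
for all $k$, with the same $g$.

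\emph{Step 3: stationarity and ergodicity.} Stationarity is immediate, since $(\eta_{k+m})_k$ is the image of $(\varepsilon_{j+m})_j$ under a fixed measurable map, and the law of $(\varepsilon_{j+m})_j$ does not depend on $m$. For ergodicity, let $A$ be a shift-invariant event for $(\eta_k)$, that is, $A=\{(\eta_k)_k\in B\}$ with $B$ shift-invariant in $\mathbb R^{\mathbb Z}$. Then $A=\{(\varepsilon_j)_j\in\Phi^{-1}(B)\}$, where $\Phi$ is the coordinatewise map built from $g$. Since $\Phi$ commutes with the shift, $\Phi^{-1}(B)$ is shift-invariant, so $\mathbf P(A)\in\{0,1\}$ by ergodicity of $(\varepsilon_j)$.

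The main obstacle is a measure-theoretic subtlety in Steps 2–3. The identity $\eta_k=g(\varepsilon_{k-1},\dots)$ holds only almost surely, so $\Phi$ commutes with the shift only up to null sets. I would handle this by fixing one Borel version $g$ once and \emph{defining} $\eta_k$ through it, which is legitimate because the $\eta_k$ are determined only a.s. A second subtlety is reconciling the one-sided conditioning $\sigma(\varepsilon_1,\dots,\varepsilon_{k-1})$ with the infinite past $\mathcal H_{k-1}$. For the finite-past version the functions $g_k$ depend on $k$, and strict stationarity genuinely fails, so the two-sided $\mathcal H$-version from the setup is the one for which I would carry out the proof.
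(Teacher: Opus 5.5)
Your proposal is correct and follows the paper's proof. Like the paper, you reduce $\eta_k$ to $\mathbf{E}(\varepsilon_k\mid\mathcal{H}_{k-1})$ by independence and get $\eta_k=\eta_0\circ\mathbb{T}^k$ from the shift-commutation identity for conditional expectations; your ergodicity argument via the shift-commuting map $\Phi$ is the paper's $\sigma\{\eta_k\}\subset\sigma\{\varepsilon_\ell\}$ argument written in sequence-space language.

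Your finite-past caveat is justified: the paper asserts $\mathbf{E}(\varepsilon_k\mid\mathfrak{F}_{k-1})=\mathbf{E}(\varepsilon_k\mid\mathcal{H}_{k-1})$ without comment, which holds only if $\mathfrak{F}_{k-1}$ contains the whole immigration past, as the Section~3 definition with $\ell\le k$ suggests. Your explicit switch to the $\mathcal{H}$-version and your choice of one fixed Borel version $g$ make the argument more careful than the original.
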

\begin{proof}
For each $k\in\mathbb Z$, denote
$\mathcal{H}_{k}:=\sigma\left\{\varepsilon_\ell, \ell \le k \right\}$.
Due to independence assumption of $\left\{\xi_{k,i}\right\}$ from $\left\{\varepsilon_{k}\right\}$, we get
$$
\mathbf{E}(\varepsilon_k\mid \mathfrak{F}_{k-1})
=
\mathbf{E}(\varepsilon_k\mid \mathcal{H}_{k-1}),
\qquad k\in\mathbb Z
$$
which means that conditioning on the past offspring variables gives no additional information about
$\varepsilon_{k}$. Hence,
$$
\eta_{k}=\mathbf{E}(\varepsilon_{k} \mid \mathcal{H}_{k-1}), \qquad k \in \mathbb Z.
$$
Since $\left\{\varepsilon_{k}, k\in\mathbb Z\right\}$ is strictly stationary, then there exists a
measure-preserving transformation $\mathbb{T}$ on the underlying probability space such that
$
\varepsilon_{k}=\varepsilon_{0}\circ \mathbb{T}^k$, $k\in\mathbb Z$.
Moreover, it follows that
$
\mathcal{H}_{k-1}=\mathbb{T}^{-k}\mathcal{H}_{-1}$,
$k\in\mathbb Z$.

Indeed, one can verify that
$$
\mathbb{T}^{-k}\mathcal{H}_{-1}
=
\mathbb{T}^{-k}\sigma \left\{\varepsilon_{\ell}, \ell \le -1\right\}
=
\sigma \left\{\varepsilon_{\ell+k}, \ell \le -1\right\}
=
\sigma \left\{\varepsilon_{m}, m\le k-1\right\}
=
\mathcal{H}_{k-1}.
$$
Now, let us set $\eta_{0}:=\mathbf{E}(\varepsilon_{0} \mid \mathcal{H}_{-1})$.
Using the property of conditional expectation under measure-preserving
transformations, namely,
$$
\mathbf{E}(Z\circ \mathbb{T}^k\mid \mathbb{T}^{-k}\mathcal A)
=
\mathbf{E}(Z\mid \mathcal A)\circ \mathbb{T}^k,
$$
we obtain for each $k\in\mathbb Z$ that
$$
\eta_k
=
\mathbf{E}(\varepsilon_k\mid \mathcal{H}_{k-1})
=
\mathbf{E}(\varepsilon_{0}\circ \mathbb{T}^{k}\mid \mathbb{T}^{-k}\mathcal{H}_{-1})
=
\mathbf{E}(\varepsilon_{0}\mid \mathcal{H}_{-1})\circ \mathbb{T}^{k}
=
\eta_{0}\circ \mathbb{T}^{k}.
$$
Therefore, the sequence $\left\{\eta_{k}, k\in\mathbb Z\right\}$ is strictly stationary.

It remains to prove ergodicity. Due to the measurability of $\eta_{k}$ with respect to
$\sigma\left\{\varepsilon_{\ell}, \ell \in \mathbb Z\right\}$, we have
$$
\sigma\left\{\eta_{k}, k\in\mathbb Z\right\}\subset \sigma\left\{\varepsilon_{\ell}, \ell \in \mathbb Z\right\}.
$$
Let $A\in \sigma\left\{\eta_k, k\in\mathbb Z\right\}$ be invariant set under the shift $\mathbb{T}$, i.e.
$\mathbb{T}^{-1}A=A$. Then $A$ is also an invariant set for the process
$\left\{\varepsilon_{k}, k\in\mathbb Z\right\}$. Since $\left\{\varepsilon_k, k\in\mathbb Z\right\}$ is ergodic,
it follows that
$
\mathbf{P}(A)\in\{0,1\}.
$
Hence, the sequence $\left\{\eta_k, k\in\mathbb Z\right\}$ is ergodic.
This completes the proof.
\end{proof}

For each $t\ge 0$, $n\geq 1$, consider the process $A_{n}=\left\{A_{n}(t), t \geq 0 \right\}$ defined as
$$
A_{n}(t):=\frac{1}{n}\sum_{k=1}^{[ nt]}\eta_{k}.
$$
\begin{lemma}\label{lem2}
For each fixed $T>0$, as $n\to \infty$,
\begin{equation}\label{eq5}
\sup_{t\in [0,T]} \left|A_{n}(t)-\lambda t\right| \to 0
\qquad\text{a.s.}
\end{equation}
\end{lemma}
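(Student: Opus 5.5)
By Lemma~\ref{lem1}, the sequence $\{\eta_k\}$ is strictly stationary and ergodic. It is also integrable, with
$$\mathbf{E}\eta_1=\mathbf{E}\,\mathbf{E}(\varepsilon_1\mid\mathfrak F_0)=\lambda,$$
since $\eta_1$ is nonnegative and $\varepsilon_1$ is integrable. Birkhoff's ergodic theorem therefore gives
$$\frac1m\sum_{k=1}^m\eta_k\to\lambda\quad\text{a.s.}$$
The plan is to upgrade this pointwise statement at $t=1$ to uniform convergence on $[0,T]$. The key inputs are the nonnegativity of $\eta_k$, so that $A_n$ is nondecreasing, and the continuity and monotonicity of the limit $t\mapsto\lambda t$. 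This is a P\'olya/Dini-type argument.

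\textbf{Step 1: a.s. convergence at every fixed $t$.} For fixed $t>0$ we have $[nt]\to\infty$, so
$$A_n(t)=\frac{[nt]}{n}\cdot\frac{1}{[nt]}\sum_{k=1}^{[nt]}\eta_k\to t\lambda\quad\text{a.s.}$$
For $t=0$ the statement is trivial. Taking the countable set of rational $t\in[0,T]$, together with $t=T$, there is one null set outside which $A_n(t)\to\lambda t$ for all such $t$ simultaneously.

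\textbf{Step 2: from a countable set to the supremum.} Fix $\omega$ outside the null set and fix $\epsilon>0$. Choose a rational grid $0=t_0<t_1<\dots<t_m=T$ with mesh below $\epsilon/\lambda$. For $t\in[t_{j-1},t_j]$, monotonicity of $A_n$ gives
$$A_n(t_{j-1})-\lambda t_j\le A_n(t)-\lambda t\le A_n(t_j)-\lambda t_{j-1}.$$
Hence
$$\sup_{t\in[0,T]}|A_n(t)-\lambda t|\le \max_{j}|A_n(t_j)-\lambda t_j|+\epsilon.$$
The first term on the right tends to zero by Step 1, and since $\epsilon>0$ is arbitrary, \eqref{eq5} follows.

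The only mildly delicate points are the ones checked at the start: integrability and the identification $\mathbf{E}\eta_1=\lambda$, and the nonnegativity of $\eta_k$, which holds because $\varepsilon_k\ge0$. Without nonnegativity one could instead use the standard fact that
$$\frac1n\max_{m\le nT}\Bigl|\sum_{k\le m}(\eta_k-\lambda)\Bigr|\to0\quad\text{a.s.},$$
which follows from Birkhoff's theorem by splitting into $m\le\delta n$ and $m>\delta n$. Here the monotone argument suffices, so I expect no real obstacle.
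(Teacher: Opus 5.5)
Your proof is correct. It differs from the paper's only in how the single Birkhoff limit is upgraded to uniform convergence on $[0,T]$.

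The paper applies Birkhoff's theorem to the centered sums $\widetilde S_j=\sum_{k\le j}(\eta_k-\lambda)$. It then asserts, as ``clear'', the deterministic fact that $\widetilde S_m/m\to0$ implies $n^{-1}\max_{0\le j\le [nT]}|\widetilde S_j|\to0$. Finally it adds the rounding error $\sup_{t}|\lambda([nt]/n-t)|\le\lambda/n$ by the triangle inequality.

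You instead use $\eta_k\ge0$, so that $t\mapsto A_n(t)$ is nondecreasing, and run a P\'olya/Dini grid argument against the continuous increasing limit $\lambda t$. The rounding error is absorbed automatically in your Step~1 through the factor $[nt]/n\to t$.

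Your route relies on a sign condition that happens to hold here, but it is complete as written. The paper's route works for arbitrary integrable stationary ergodic increments, but it leaves its key uniformity step unjustified. That step is exactly the splitting argument ($m\le\delta n$ versus $m>\delta n$) you sketch in your last paragraph.

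One small simplification: the countable set in Step~1 is unnecessary. The single Birkhoff null set already gives $A_n(t)\to\lambda t$ for every $t\ge0$ simultaneously.
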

\begin{proof}
By Lemma \ref{lem1}, the sequence $\left\{\eta_{k}, k \geq 1\right\}$ is stationary ergodic and integrable, with mean $\lambda$. Hence, Birkhoff's Ergodic Theorem yields that as $n\to \infty$,
$$
\frac{1}{n}\sum_{k=1}^{n}\left(\eta_{k}-\lambda\right) \to 0
\qquad\text{a.s.}
$$
Denote
$
\widetilde{S}_{j}:=\sum_{k=1}^{j}(\eta_k-\lambda)$, $j \ge1 $,  $\widetilde{S}_{0}:=0$.
Then, it clear that for each
$T>0$,
$$
\frac{1}{n} \max_{0\le j\le [nT]}\left|\widetilde{S}_{j}\right| \to 0
\qquad\text{a.s.}
$$
Therefore,
$$
\sup_{t\in [0,T]} \left|\frac{1}{n}\sum_{k=1}^{[nt]}(\eta_{k}-\lambda)\right|
=
\frac{1}{n} \max_{0\le j\le [nT]}\left|\widetilde{S}_{j}\right| \to 0 \qquad\text{a.s.}
$$
Next, observe that
$$
\sup_{t\in [0,T]}\left|\frac{[nt]}{n}-t\right|\le \frac{1}{n} \to 0, \ \ n \to \infty.
$$
In view of the triangle inequality,
$$
\sup_{t\in [0,T]}\left|A_{n}(t)-\lambda t\right|
\leq
\sup_{t\in [0,T]}\left|\frac{1}{n}\sum_{k=1}^{[nt]}(\eta_{k}-\lambda)\right|
+
\sup_{t\in [0,T]}\left|\lambda\left(\frac{[nt]}{n}-t\right)\right|,
$$
we arrive at the conclusion of lemma.
\end{proof}

In our proofs, we need following formulas for moments of branching processes with strictly stationary immigration.
\begin{lemma}\label{lem3}
Let $\left\{X_{k},k\geq 0 \right\}$ be the critical branching processes with strictly stationary immigration. If $\mathbf{E}X_{0}^{2}<\infty$, $\sigma^{2}<\infty$ and $b^{2}<\infty$, then for all $k\geq 1$, we have
\begin{equation}\label{eq6}
\mathbf{E}(X_k\mid \mathfrak F_{k-1})=X_{k-1}+\eta_{k},
\end{equation}
\begin{equation}\label{eq7}
\mathbf{E}(X_k)=\mathbf{E}(X_{k-1})+\lambda = \mathbf{E}(X_0)+\lambda k,
\end{equation}
\begin{equation}\label{eq8}
\operatorname{Var}(X_{k}\mid \mathfrak F_{k-1})
=\mathbf{E}(M_{k}^{2} \mid \mathfrak F_{k-1})
=\sigma^{2} X_{k-1}+\nu_{k},
\end{equation}
\begin{equation}\label{eq9}
\operatorname{Var}(X_{k})
=\operatorname{Var}(X_{k-1})
+\sigma^{2}\mathbf{E}(X_{k-1})
+\sigma^{2}
+2 \operatorname{Cov}(X_{k-1}, \eta_{k}),
\end{equation}
\begin{equation}\label{eq10}
\mathbf{E}(M_{k}^{2})
= \sigma^{2} \mathbf{E}(X_{k-1})
+\mathbf{E}\nu_{k}.
\end{equation}
\end{lemma}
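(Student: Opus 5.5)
The plan is to obtain all five identities directly from the decomposition $X_k=X_{k-1}+T_k+\varepsilon_k$, where $T_k=\sum_{j=1}^{X_{k-1}}(\xi_{k,j}-1)$ as in \eqref{eq3}. Two structural facts do the work.

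\emph{Fact (a).} The vector $(\xi_{k,j})_{j\ge1}$ is independent of $\mathfrak F_{k-1}\vee\sigma(\varepsilon_k)$. This holds because the offspring family is i.i.d. and independent of $X_0$ and of the whole immigration sequence.

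\emph{Fact (b).} $X_{k-1}$ is $\mathfrak F_{k-1}$-measurable.

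Since $a=1$, conditioning on $\mathcal G_{k-1}:=\mathfrak F_{k-1}\vee\sigma(\varepsilon_k)$ and applying the standard computation for random sums (Wald-type identities) gives
$$\mathbf E(T_k\mid\mathcal G_{k-1})=0,\qquad \mathbf E(T_k^2\mid\mathcal G_{k-1})=\sigma^2X_{k-1}.$$
To make this rigorous, I would first check it on the events $\{X_{k-1}=m\}$ and then sum over $m$. Integrability comes from $\mathbf E X_0^2<\infty$, $\sigma^2<\infty$, $b^2<\infty$ and an induction on $k$ showing $\mathbf E X_k^2<\infty$.

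Identity \eqref{eq6} follows by taking $\mathbf E(\cdot\mid\mathfrak F_{k-1})$ and using the tower property through $\mathcal G_{k-1}$. Identity \eqref{eq7} then follows by taking expectations and using $\mathbf E\eta_k=\mathbf E\varepsilon_k=\lambda$, which holds by stationarity, and then iterating in $k$.

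For \eqref{eq8}, write $M_k=T_k+N_k$ with $N_k=\varepsilon_k-\eta_k$, which is $\mathcal G_{k-1}$-measurable. The cross term vanishes because
$$\mathbf E(T_kN_k\mid\mathfrak F_{k-1})=\mathbf E\bigl(N_k\,\mathbf E(T_k\mid\mathcal G_{k-1})\mid\mathfrak F_{k-1}\bigr)=0.$$
Hence $\mathbf E(M_k^2\mid\mathfrak F_{k-1})=\sigma^2X_{k-1}+\mathbf E(N_k^2\mid\mathfrak F_{k-1})=\sigma^2X_{k-1}+\nu_k$. The first equality in \eqref{eq8} is just the definition of the conditional variance, since $M_k=X_k-\mathbf E(X_k\mid\mathfrak F_{k-1})$. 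Taking expectations yields \eqref{eq10}.

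For \eqref{eq9}, I would use the unconditional version of the same splitting. Since $\mathbf E(T_k\mid\mathcal G_{k-1})=0$, $T_k$ is uncorrelated with $X_{k-1}+\varepsilon_k$, so
$$\operatorname{Var}X_k=\operatorname{Var}(X_{k-1}+\varepsilon_k)+\sigma^2\mathbf E X_{k-1}=\operatorname{Var}X_{k-1}+b^2+2\operatorname{Cov}(X_{k-1},\varepsilon_k)+\sigma^2\mathbf E X_{k-1}.$$
Because $X_{k-1}$ is $\mathfrak F_{k-1}$-measurable, $\operatorname{Cov}(X_{k-1},\varepsilon_k)=\operatorname{Cov}(X_{k-1},\eta_k)$.

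This derivation produces the constant $b^2=\operatorname{Var}(\varepsilon_1)$ in place of the printed ``$\sigma^2$'' in \eqref{eq9}. I expect that term is a typo and would state the identity with $b^2$.

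The only step needing care is the conditional second moment of the random sum $T_k$, in particular justifying conditioning on $\mathcal G_{k-1}$, which includes $\varepsilon_k$. This rests entirely on the mutual independence of the offspring and immigration families together with $X_0$. The remaining steps are routine tower-property manipulations.
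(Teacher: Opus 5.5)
Your proof is correct. For \eqref{eq6}, \eqref{eq7}, \eqref{eq8} and \eqref{eq10} it is the paper's argument. Conditioning on $\mathfrak F_{k-1}\vee\sigma(\varepsilon_k)$ gives an actual justification of $\mathbf E(T_kN_k\mid\mathfrak F_{k-1})=0$, which the paper only asserts ``by independence''. You are also right that the constant in \eqref{eq9} must be $b^2$, not $\sigma^2$; the paper's own computation produces a $kb^2$ term, which confirms the typo.

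Your route differs from the paper's for \eqref{eq9}. The paper does not split off one step. It expands the whole path, $X_k=X_0+\sum_{i=1}^kT_i+\sum_{i=1}^k\varepsilon_i$, and shows that all cross-covariances vanish ($X_0$ with the $T_i$ and $\varepsilon_i$, $T_i$ with $T_j$, $T_i$ with $\varepsilon_j$). This gives the closed form
\[
\operatorname{Var}(X_k)=\operatorname{Var}(X_0)+\sigma^2k\,\mathbf E(X_0)+\tfrac{\lambda\sigma^2}{2}k(k-1)+kb^2+2\sum_{j=1}^{k}(k-j)\rho(j).
\]
The recursion then follows only after differencing in $k$ and noting $\operatorname{Cov}(X_{k-1},\eta_k)=\sum_{j=1}^{k-1}\rho(j)$, a step the paper leaves implicit.

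Your one-step decomposition proves the stated recursion directly and with less bookkeeping. The paper's telescoped version instead delivers the closed form, which is what Lemma~\ref{lem4} actually uses when it reduces $\operatorname{Var}(X_n)=O(n^2)$ to the bound on $\mathbf E(S_n^2)$. You can recover that closed form by summing your recursion.
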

\begin{proof}
Due to criticality of process,
$$
\mathbf{E}\!\left(\sum_{j=1}^{X_{k-1}}\xi_{k,j}\,\middle|\,\mathfrak F_{k-1}\right)
=
\sum_{j=1}^{X_{k-1}}\mathbf{E}(\xi_{k,j})
=
X_{k-1},
$$
which yields
$$
\mathbf{E}(X_k\mid \mathfrak F_{k-1})
=
\mathbf{E}\!\left(\sum_{j=1}^{X_{k-1}}\xi_{k,j}+\varepsilon_k\,\middle|\,\mathfrak{F}_{k-1}\right)
=
X_{k-1}+\eta_{k}.
$$
This proves \eqref{eq6}.

Further, observe that
$$
\mathbf{E}(X_k)=\mathbf{E}(X_{k-1})+\mathbf{E}(\eta_k)
=\mathbf{E}(X_{k-1})+\mathbf{E}(\varepsilon_k)
=\mathbf{E}(X_{k-1})+\lambda,
$$
and strict stationarity of $\left\{\varepsilon_k\right\}$ gives \eqref{eq7}.

It is easily checked that
$$
M_{k}
=
X_{k}-\mathbf{E}(X_{k}\mid \mathfrak{F}_{k-1})
=
\sum_{j=1}^{X_{k-1}}(\xi_{k,j}-1)+(\varepsilon_{k}-\eta_{k}).
$$
By the independence of the offspring family and the immigration sequence, the cross term has
conditional expectation zero given $\mathfrak{F}_{k-1}$. Therefore,
$$
\mathbf{E}(M_{k}^{2} \mid \mathfrak{F}_{k-1})
=
\mathbf{E}\!\left[\left(\sum_{j=1}^{X_{k-1}}(\xi_{k,j}-1)\right)^{2} \middle|\,\mathfrak{F}_{k-1}\right]
+
\nu_{k}.
$$
Since, conditionally on $\mathfrak{F}_{k-1}$, the variables $\xi_{k,j}-1$ are centered and
independent, we have
$$
\mathbf{E}\!\left[\left(\sum_{j=1}^{X_{k-1}}(\xi_{k,j}-1)\right)^{2} \middle|\,\mathfrak{F}_{k-1}\right]
=
\sigma^{2} X_{k-1}.
$$
Thus, \eqref{eq8} follows.

In order to prove \eqref{eq9}, we use the recursion
$$
X_{k}=X_{0}+\sum_{i=1}^{k} T_{i}+\sum_{i=1}^{k} \varepsilon_{i}, \ \ k \geq 1.
$$
This leads us to write
$$
\operatorname{Var}(X_{k})
=
\operatorname{Var}(X_{0})
+
\operatorname{Var}\!\left(\sum_{i=1}^{k} T_{i}\right)
+
\operatorname{Var}\!\left(\sum_{i=1}^{k} \varepsilon_{i}\right)$$
$$
+2\operatorname{Cov}\!\left(X_{0},\sum_{i=1}^{k} T_{i}\right)
+
2\operatorname{Cov}\!\left(X_{0},\sum_{i=1}^{k} \varepsilon_{i}\right)
+
2\operatorname{Cov}\!\left(\sum_{i=1}^{k} T_{i},\sum_{i=1}^{k} \varepsilon_{i}\right).
$$
We first show that all covariance terms vanish. Since $X_{0}$ is independent of the
immigration sequence, we get
$$
\operatorname{Cov}\!\left(X_{0},\sum_{i=1}^{k} \varepsilon_{i}\right) = 0.
$$
Next, observe that for each $i \geq 1$, $\mathbf{E}(T_{i}\mid \mathfrak{F}_{i-1})=0$,
and since $X_{0}$ is $\mathfrak{F}_{i-1}$-measurable, one has that
$$
\mathbf{E}(X_{0}T_{i})=\mathbf{E}\!\left[X_{0}\,\mathbf{E}(T_{i} \mid \mathfrak{F}_{i-1})\right]=0,
$$
therefore,
$$
\operatorname{Cov}\!\left(X_{0},\sum_{i=1}^{k} T_{i}\right)=0.
$$
Next, for every $i,j\geq 1$,
$$
\mathbf{E}(T_{i}\varepsilon_{j})
=
\mathbf{E}\!\left(\varepsilon_{j}\,\mathbf{E}(T_{i}\mid X_{i-1},\varepsilon_{j})\right).
$$
Due to the fact that the offspring variables $\{\xi_{i,r}, r\ge 1\}$ are independent of $(X_{i-1},\varepsilon_{j})$, we get
$$
\mathbf{E}(T_{i}\mid X_{i-1},\varepsilon_{j})=\mathbf{E}(T_{i} \mid X_{i-1})=0.
$$
Thus, $\mathbf{E}(T_{i}\varepsilon_{j})=0$, and consequently,
$$
\operatorname{Cov}\!\left(\sum_{i=1}^{k} T_{i},\sum_{i=1}^{k} \varepsilon_{i}\right)=0.
$$
Hence,
$$
\operatorname{Var}(X_{k})
=
\operatorname{Var}(X_{0})
+
\operatorname{Var}\!\left(\sum_{i=1}^{k} T_{i}\right)
+
\operatorname{Var}\!\left(\sum_{i=1}^{k} \varepsilon_{i}\right).
$$
Taking into account the fact that for $1\le i<j\le k$, the variable $T_i$ is $\mathfrak{F}_{j-1}$-measurable and noting
$\mathbf{E}(T_{j}\mid \mathfrak{F}_{j-1})=0$, we obtain
$$
\mathbf{E}(T_{i}T_{j})=\mathbf{E}\!\left[T_{i}\,\mathbf{E}(T_{j}\mid \mathfrak{F}_{j-1})\right]=0.
$$
Therefore, using $\mathbf E(T_{i}^{2})=\sigma^{2}\mathbf{E}(X_{i-1})$ and $\mathbf{E}(X_{i-1})=\mathbf{E}(X_{0})+\lambda(i-1)$,
we conclude
$$
\operatorname{Var}\!\left(\sum_{i=1}^{k} T_{i}\right)=\sum_{i=1}^{k} \mathbf{E}(T_{i}^{2})=
\sigma^2\sum_{i=1}^{k} \mathbf{E}(X_{i-1})
=
\sigma^{2} k\,\mathbf{E}(X_{0})
+
\frac{\lambda\sigma^{2}}{2}k(k-1).
$$
Consequently, by strict stationarity of $\left\{\varepsilon_k\right\}$,
$$
\operatorname{Var}\!\left(\sum_{i=1}^{k} \varepsilon_i\right)
=
\sum_{i=1}^{k} \operatorname{Var}(\varepsilon_i)
+
2\sum_{1\le i<j\le k}\operatorname{Cov}(\varepsilon_{i},\varepsilon_{j})
=
kb^{2}+2\sum_{j=1}^{k}(k-j)\rho(j).
$$
This ends the proof of \eqref{eq9}.

Note that \eqref{eq10} is consequence of \eqref{eq8}. Thus, Lemma is proved.
\end{proof}

\begin{lemma}\label{lem4}
Assume that the conditions of Lemma~3 hold. If $\delta_{n,2}=o\left(\sqrt{n}\right)$ as $n\to \infty$, then, as $n\to \infty$,
$$
\operatorname{Var}(X_{n})=O(n^{2}), \qquad
\mathbf{E}(X_{n}^{2})=O(n^{2}),\qquad
\mathbf{E}(|M_{n}|)=O(n^{1/2}),\qquad
\mathbf{E}(M_{n}^{2})=O(n).
$$
\end{lemma}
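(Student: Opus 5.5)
The plan is to start from the decomposition used in the proof of Lemma~\ref{lem3}:
$$X_n=X_0+\sum_{i=1}^n T_i+\sum_{i=1}^n\varepsilon_i .$$
The three summands are pairwise uncorrelated, so
$$\operatorname{Var}(X_n)=\operatorname{Var}(X_0)+\sigma^2\sum_{i=1}^n\mathbf E(X_{i-1})+\operatorname{Var}(S_n).$$
The first term is a constant. By \eqref{eq7}, the second equals $\sigma^2 n\mathbf E(X_0)+\lambda\sigma^2 n(n-1)/2=O(n^2)$. For the third, the crude bound $\|S_n\|_2\le n\|\zeta_1\|_2=nb$ from Minkowski and stationarity already gives $\operatorname{Var}(S_n)\le b^2n^2$. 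So $\operatorname{Var}(X_n)=O(n^2)$ holds without the dependence condition. I will nevertheless note that the Peligrad--Utev $\mathbf L_2$-maximal inequality (recalled in the Appendix) gives
$$\|\max_{k\le n}|S_k|\|_2\le C\sqrt n\,(\|\zeta_1\|_2+\delta_{n,2}),$$
and hence, under $\delta_{n,2}=o(\sqrt n)$, the sharper bound $\operatorname{Var}(S_n)=o(n^2)$. That sharper bound is what will matter later for $Y^{(3)}_n$, and it does no harm here. Then $\mathbf E(X_n^2)=\operatorname{Var}(X_n)+(\mathbf E X_0+\lambda n)^2=O(n^2)$.

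For the martingale differences I will use \eqref{eq10}: $\mathbf E(M_n^2)=\sigma^2\mathbf E(X_{n-1})+\mathbf E\nu_n$. The first term is $\sigma^2(\mathbf EX_0+\lambda(n-1))=O(n)$. For the second, conditional Jensen gives $\mathbf E\nu_n=\mathbf E\varepsilon_n^2-\mathbf E\eta_n^2\le \mathbf E\varepsilon_1^2=b^2+\lambda^2$, which is a constant by stationarity. Hence $\mathbf E(M_n^2)=O(n)$. Finally, Cauchy--Schwarz gives $\mathbf E|M_n|\le(\mathbf EM_n^2)^{1/2}=O(n^{1/2})$.

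The only step needing care is the identification $\eta_k=\mathbf E(\varepsilon_k\mid\mathcal H_{k-1})$, used to bound $\mathbf E\nu_n$ uniformly in $n$; this is exactly what the proof of Lemma~\ref{lem1} provides. The real obstacle is not the $O(\cdot)$ bounds themselves, which follow from second moments and criticality alone. It is keeping the covariance computation in Lemma~\ref{lem3} honest for dependent immigration, which I take as given from that lemma. I will therefore present the $\delta_{n,2}$ assumption as entering only through the optional refinement $\operatorname{Var}(S_n)=o(n^2)$.
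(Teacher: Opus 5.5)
Your argument is correct, and its skeleton is the paper's: the three-term decomposition of $\operatorname{Var}(X_n)$ from the proof of Lemma~\ref{lem3}, then $\mathbf{E}(X_n^2)=\operatorname{Var}(X_n)+(\mathbf{E}X_n)^2$, then \eqref{eq10} and Cauchy--Schwarz.

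The genuine difference is how you bound $\operatorname{Var}(S_n)$. The paper applies the Peligrad--Utev maximal inequality (Theorem A.2) and uses $\delta_{n,2}=o(\sqrt n)$ to get $\mathbf{E}(S_n^2)=o(n^2)$. You observe instead that Minkowski's inequality already gives $\|S_n\|_2\le nb$. That suffices, because the branching part $\sigma^2\sum_{i\le n}\mathbf{E}X_{i-1}$ is of exact order $n^2$ anyway. Your route is more elementary and shows that the dependence hypothesis plays no role in this lemma. The paper's route only yields the sharper $o(n^2)$ for the immigration part, which the conclusion does not use. Incidentally, that sharper bound also follows from ergodicity alone, since the $\mathbf{L}_2$ ergodic theorem gives $\|S_n\|_2=o(n)$.

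Your version of the maximal inequality, which keeps the term $\|\zeta_1\|_2$, is the correct one. The paper's display $\|S_n\|_2\le C\sqrt n\,\delta_{n,2}$ drops that term and is false in general. For martingale-difference immigration $\delta_{n,2}=0$, so it would force $S_n=0$ despite $b^2>0$.

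The identification of $\eta_k$ from Lemma~\ref{lem1} is not needed for $\mathbf{E}\nu_n\le\mathbf{E}\varepsilon_1^2$. That bound holds for any conditioning $\sigma$-algebra and uses only that the $\varepsilon_k$ are identically distributed.

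The cross-covariances in Lemma~\ref{lem3} are also not the obstacle you fear. Their vanishing uses only that $\{\xi_{i,r}, r\ge 1\}$ is independent of $X_{i-1}$ and of the whole immigration sequence, so dependence among the $\varepsilon_k$ never enters there.
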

\begin{proof}
It is well known (see \cite[Lemma A.1]{IP14}) that if the immigration sequence is an i.i.d., then $\operatorname{Var}(X_{n})=O(n^{2})$, $n \to \infty$.
We show this asymptotic bound also remains valid in the case of strictly stationary immigration. For this aim, we recall that $S_{n}=\sum_{i=1}^{n}\zeta_{i}$. Then, due to the strict stationarity of $\left\{\varepsilon_{n}\right\}$, we deduce
$$
\mathbf{E}(S_{n}^{2}) = b^{2}n
+
2\sum_{j=1}^{n-1}(n-j)\rho(j).
$$
According to Theorem A.2 with $p=2$,
$$
\left\|\max_{1\le i\le n}|S_i|\right\|_{2}
\le
C \sqrt{n}\delta_{n,2},
\qquad n \geq 1,
$$
for some constant $C>0$. Hence,
$$
\|S_{n}\|_{2}\le \left\|\max_{1\le i\le n}|S_i|\right\|_{2} \le C \sqrt{n}\delta_{n,2},
\qquad n \geq 1,
$$
which yields
$$
\mathbf{E}(S_{n}^{2})=O(n\delta_{n,2}^{2}), \ \ n\to \infty.
$$
Taking into account $\delta_{n,2}=o\left(\sqrt{n}\right)$, we obtain $\operatorname{Var}(X_{n})=O(n^{2})$.

Next, it is clear that
$$
\mathbf{E}(X_{n}^{2})
=
\operatorname{Var}(X_{n})+\bigl(\mathbf{E}(X_{n})\bigr)^{2}
=
O(n^{2}),
\qquad n \to \infty.
$$
Moreover, from \eqref{eq10} and combining $\mathbf{E}\nu_{n}\leq b^{2}$ with $\mathbf{E}X_{n}=O(n)$, we establish
$$
\mathbf{E}(M_{n}^{2})=O(n), \ \ n\to \infty.
$$
Finally, by the Cauchy--Schwarz inequality,
$$
\mathbf{E}(|M_{n}|)
\le
\sqrt{\mathbf{E}(M_{n}^{2})}
=
O(n^{1/2}), \ \ n\to \infty.
$$
This completes the proof.
\end{proof}

 %Use {thm} for Theorems, {cor} for Corollaries,
 %{lem} for Lemmata, {prop} for Propositions, etc.
 %numbered within sections.

\section{Proof of the main result}

The proof of the main result is organized as follows. First, using representation \eqref{eq2*}, we demonstrate that the process $Y_{n}^{(1)}$ converges weakly in the Skorokhod topology to a diffusion process. Then, by combining the convergence of $A_{n}(t)$ with the continuous mapping theorem, we derive the weak convergence of $Y_{n}^{(1)}$ to the diffusion process $Y$.

\begin{proposition}
Under the assumptions of Theorem 1,
\begin{equation}\label{eq11}
\bigl\{Y_{n}^{(1)}(t),\, t \ge 0 \bigr\}
\mathop{\to}\limits^{\mathcal{D}}
\bigl\{\widetilde{Y}(t),\, t\ge 0 \bigr\}, \ \ n\to \infty,
\end{equation}
where the limit process $\{\widetilde{Y}(t),\, t \ge 0\}$ is the pathwise unique strong solution of the SDE
\begin{equation}\label{eq12}
\mathrm{d}\widetilde{Y}(t)=\lambda\,\mathrm{d}t+\sigma\sqrt{\widetilde{Y}(t)^{+}}\,\mathrm{d}W(t),
\qquad t \ge 0,
\end{equation}
with initial condition $Y(0)=0$.
\end{proposition}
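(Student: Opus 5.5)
The plan is to apply the Ispány--Pap functional limit theorem for random step processes recalled in the Appendix. This theorem yields convergence in $D[0,\infty)$ to the pathwise unique solution of an SDE $\mathrm{d}U=\beta(t,U)\,\mathrm{d}t+\gamma(t,U)\,\mathrm{d}W$. It requires three things: convergence of the initial value, a conditional-drift condition, and a conditional-variance condition, both evaluated along the step process itself, together with a conditional Lindeberg condition. I take $\beta(t,x)=\lambda$ and $\gamma(t,x)=\sigma\sqrt{x^{+}}$. These coefficients are continuous and have at most linear growth, and by \cite{IW89} the SDE \eqref{eq12} with $\widetilde Y(0)=0$ has a pathwise unique strong solution, so the uniqueness hypothesis of the theorem holds. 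The initial condition holds because $X_0/n\to 0$ in $\mathbf L_2$, since $\mathbf{E}X_0^2<\infty$.

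I would read the increments of the step process through the filtration $\{\mathfrak F_k\}$ and the decomposition $X_k-X_{k-1}=M_k+\eta_k$ of \eqref{eq2*}. The predictable part $\eta_k/n$ accumulates to $A_n(t)$. By Lemma~\ref{lem2}, $\sup_{t\le T}|A_n(t)-\lambda t|\to0$ a.s., and this is exactly the drift condition with $\beta\equiv\lambda$; stationarity and ergodicity of $\eta_k$ from Lemma~\ref{lem1} are what is used here.

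For the conditional-variance condition I use \eqref{eq8}:
\[
\frac1{n^2}\sum_{k=1}^{[nt]}\mathbf E(M_k^2\mid\mathfrak F_{k-1})=\frac{\sigma^2}{n^2}\sum_{k=1}^{[nt]}X_{k-1}+\frac1{n^2}\sum_{k=1}^{[nt]}\nu_k .
\]
The second term tends to $0$ in $\mathbf L_1$ uniformly on $[0,T]$, since $\mathbf E\nu_k\le b^2$. In the first term I rewrite $X_{k-1}/n$ through \eqref{eq4} as the value of the step process at $(k-1)/n$ plus $-Y_n^{(2)}+Y_n^{(3)}$, and show that these corrections vanish uniformly on compacts in probability. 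For $Y_n^{(2)}$, the $N_k$ are martingale differences with $\mathbf E N_k^2\le b^2$, so Doob's inequality gives $\mathbf E\sup_{t\le T}|Y_n^{(2)}(t)|^2=O(1/n)$. For $Y_n^{(3)}$, the maximal inequality Theorem~A.2 with $p=2$ gives $\|\max_{i\le nT}|S_i|\|_2\le C\sqrt{nT}\,\delta_{[nT],2}=o(n)$, and this is precisely where the hypothesis $\delta_{n,2}=o(\sqrt n)$ enters. Since $|\sqrt{x^+}^2-\sqrt{y^+}^2|\le|x-y|$, the conditional variance is then $\int_0^t\gamma^2(s,\cdot)\,\mathrm ds$ evaluated along the step process, up to an error that is $o_{\mathbf P}(1)$ uniformly on $[0,T]$.

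The main obstacle I expect is the conditional Lindeberg condition
\[
\frac1{n^2}\sum_{k=1}^{[nT]}\mathbf E\bigl(M_k^2\mathbf 1_{\{|M_k|>\theta n\}}\mid\mathfrak F_{k-1}\bigr)\xrightarrow{\mathbf P}0 ,
\]
because only second moments are assumed. I would split $M_k=T_k+N_k$ and use $M_k^2\mathbf 1_{\{|M_k|>\theta n\}}\le 4T_k^2\mathbf 1_{\{|T_k|>\theta n/2\}}+4N_k^2\mathbf 1_{\{|N_k|>\theta n/2\}}$. For the immigration part, stationarity and $\mathbf E\varepsilon_1^2<\infty$ give expected value at most $\frac{C[nT]}{n^2}\,\mathbf E\bigl(\varepsilon_1^2\mathbf 1_{\{\varepsilon_1>\theta n/4\}}\bigr)+o(1)\to0$; the variables $\eta_k$ are handled with the conditional Jensen inequality. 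For the branching part, conditionally on $X_{k-1}=m$, the variable $T_k$ is a sum of $m$ i.i.d.\ centered variables with variance $\sigma^2$. Following \cite{BBP21}, I would truncate each $\xi_{k,j}-1$ at level $\sqrt{\theta n}$ and bound the truncated sum by Chebyshev and its fourth moment. This gives a bound of the form $\sigma^2X_{k-1}\bigl[g(\sqrt{\theta n})+C X_{k-1}/(\theta^2 n^2)\bigr]$, where $g(u)=\mathbf E\bigl((\xi_{1,1}-1)^2\mathbf 1_{\{|\xi_{1,1}-1|>u\}}\bigr)\to0$. Using $\mathbf E X_{k-1}=O(n)$ and $\mathbf E X_{k-1}^2=O(n^2)$ from Lemma~\ref{lem4}, one should obtain $o(1)$ in expectation. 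With all the hypotheses of the Appendix theorem verified, \eqref{eq11} follows.
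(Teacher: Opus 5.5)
\textbf{There is a genuine gap.} Your coefficients $\beta\equiv\lambda$, $\gamma(t,x)=\sigma\sqrt{x^{+}}$ do not fit the process in \eqref{eq11}. The argument only appears to close because it silently uses two different step processes.

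The process $Y_n^{(1)}$ has increments $U_k^{(n)}=M_k/n$, which are martingale differences with respect to $\{\mathfrak{F}_k\}$. Hence the quantity in condition (a) of Theorem A.1 equals $\sup_{t\in[0,T]}\lambda t=\lambda T$, which does not tend to $0$. Your drift check is done for the increments $(X_k-X_{k-1})/n=(M_k+\eta_k)/n$, i.e.\ for $Y_n=X_{[n\cdot]}/n$, not for $Y_n^{(1)}$.

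In the variance step you then pass to $Y_n^{(1)}$ through \eqref{eq4}. But \eqref{eq4} as printed drops a term: since $\eta_k=\zeta_k+\lambda-N_k$, one has $X_m=X_0+\sum_{k=1}^{m}(M_k-N_k+\zeta_k)+\lambda m$. The exact identity is $X_{k-1}/n=Y_n^{(1)}\bigl(\frac{k-1}{n}\bigr)+A_n\bigl(\frac{k-1}{n}\bigr)$. So the conditional variance of $Y_n^{(1)}$ is asymptotically $\int_0^t\sigma^2\bigl(Y_n^{(1)}(s)+\lambda s\bigr)^{+}\,\mathrm{d}s$, not $\int_0^t\sigma^2\,Y_n^{(1)}(s)^{+}\,\mathrm{d}s$.

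The two slips cancel, but the conclusion they produce is false. We have $\mathbf{E}\,Y_n^{(1)}(t)=\mathbf{E}X_0/n\to0$ and $\sup_n\mathbf{E}\bigl(Y_n^{(1)}(t)\bigr)^2<\infty$, because the $M_k$ are orthogonal with $\mathbf{E}M_k^2=O(k)$. So every limit has mean $0$, whereas the solution of \eqref{eq12} started at $0$ has mean $\lambda t$.

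The SDE \eqref{eq12} in the statement is a misprint. The paper's proof uses $\beta\equiv0$ and $\gamma(t,x)=\sigma\sqrt{(x+\lambda t)^{+}}$. The limit actually obtained therefore solves $\mathrm{d}\widetilde Y(t)=\sigma\sqrt{(\widetilde Y(t)+\lambda t)^{+}}\,\mathrm{d}W(t)$, $\widetilde Y(0)=0$, i.e.\ $\widetilde Y=Y-\lambda t$. This is exactly what \eqref{eq14} and the final step $Y_n=Y_n^{(1)}+A_n$ in the proof of Theorem~\ref{thm1} require.

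\textbf{How to repair it.} Apply Theorem A.1 to $Y_n^{(1)}$ with the paper's coefficients.
Uniqueness in law for every initial value follows from the shift $Z=\widetilde Y+\lambda t$, which turns the equation into the squared Bessel SDE. Condition (a) is then trivial. For (b), use the exact identity above: $\frac{\sigma^2}{n^2}\sum_{k=1}^{[nt]}X_{k-1}$ equals $\int_0^t\sigma^2\bigl(Y_n^{(1)}(s)+A_n(s)\bigr)\,\mathrm{d}s$ up to a boundary term bounded by $\sigma^2n^{-2}\max_{k\le nT}X_k$. Lemma~\ref{lem2} then replaces $A_n(s)$ by $\lambda s$. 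Neither $Y_n^{(2)}$, $Y_n^{(3)}$ nor Theorem A.2 is needed for this step.

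Your Lindeberg argument carries over unchanged. Your bound $(x+y)^2\mathbf{1}\{|x+y|>c\}\le4x^2\mathbf{1}\{|x|>c/2\}+4y^2\mathbf{1}\{|y|>c/2\}$ is in fact tidier than the paper's split, which generates the cross term $L_{n,1}^{(2)}(\varepsilon)$.

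Alternatively, your coefficients are the right ones if you apply Theorem A.1 consistently to $Y_n$, with $U_0^{(n)}=X_0/n$ and $U_k^{(n)}=(M_k+\eta_k)/n$. Then (a) is Lemma~\ref{lem2}. Condition (b) needs no decomposition, because $\frac{\sigma^2}{n^2}\sum_{k=1}^{[nt]}X_{k-1}$ is already $\int_0^t\sigma^2Y_n(s)\,\mathrm{d}s$ up to the same boundary term. The Lindeberg sum must then also absorb $\eta_k$, which is harmless since $\mathbf{E}\eta_k^2\le\mathbf{E}\varepsilon_1^2$. That route proves Theorem~\ref{thm1} directly, but it does not prove \eqref{eq11}.
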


\begin{proof}
Recall from representation \eqref{eq2*} that
$$
Y_{n}^{\left(1\right)}\left(t\right)=\frac{1}{n} \left(X_{0}+\sum_{k=1}^{\left[nt\right]}M_{k}\right)=\frac{1}{n}X_{[nt]}-A_{n}(t).
$$
To prove \eqref{eq11}, we apply Theorem A.1 from the Appendix with
$U_{k}^{(n)}:=n^{-1}M_k$, $U_0^{(n)}:=n^{-1}X_{0}$, $\mathfrak{F}_{k-1}^{(n)}=\mathfrak{F}_{k-1}$, $k,n \geq 1$,
and with coefficient functions
$$
\beta(t,x):=0,
\ \
\gamma(t,x):=\sigma\sqrt{(x+\lambda t)^{+}},
\ \ t\geq 0,\ \  x\in \mathbb{R}.
$$
Then $\mathcal{U}^{(n)}=Y_{n}^{(1)}$, $n\geq 1$ and $\mathcal{U}=\widetilde{Y}$.
We begin with checking that \eqref{eq12} admits a pathwise unique strong solution.
Let $\left\{\widetilde{Y}^{(x)}(t), t\geq 0\right\}$ be a strong solution of \eqref{eq12} with all initial value
$\widetilde{Y}^{(x)}(0)=x\in\mathbb R$, and then define the process $\left\{Z(t), t\geq 0\right\}$ given by
$$
Z(t):=\widetilde{Y}^{(x)}(t)+\lambda t, \qquad t \ge 0.
$$
Then, applying It\^o's formula, one can verify that the process $\left\{Z(t), t \geq 0\right\}$ satisfies the following SDE:
\begin{equation}\label{eq13}
\mathrm{d}Z(t)=\lambda\,\mathrm{d}t+\sigma\sqrt{Z(t)^{+}}\,\mathrm{d}W(t),
\qquad t \ge 0,
\end{equation}
with initial condition $Z(0)=x$. Conversely, if $\left\{Z^{(x)}(t), t\geq 0\right\}$ is a strong
solution of \eqref{eq13} with initial condition $Z^{(x)}(0)=x$, then again from Ito's formula, the process
$$
\widetilde{Y}(t):=Z^{(x)}(t)-\lambda t,\qquad t \ge 0,
$$
is a strong solution of \eqref{eq13} with initial value $\widetilde{Y}(0)=x$.

Observe that the SDE \eqref{eq13} coincides with the SDE \eqref{eq2}. Hence the SDE \eqref{eq13}, and therefore also SDE \eqref{eq12}, has a pathwise unique strong solution for every initial value.
In addition,
\begin{equation}\label{eq14}
\{\widetilde{Y}(t)+\lambda t, t \geq 0\} \stackrel{\mathcal{D}}{=} \{Y(t), t\geq 0\}.
\end{equation}

We note that $\mathbf{E}\bigl((U_{k}^{(n)})^{2}\bigr)<\infty$ for each
$n \geq 1$ and $k \geq 0$. In fact, Lemma 4 yields
$
\mathbf{E}\bigl((U_{k}^{(n)})^{2}\bigr)
=
n^{-2}\mathbf{E}(M_{k}^{2})<\infty$,
$ n,k\geq 1$,
while the assumption $\mathbf{E}(X_{0}^2)<\infty$ entails
$
\mathbf{E}\bigl((U_{0}^{(n)})^2\bigr)
=
n^{-2}\mathbf{E}(X_{0}^{2})<\infty$,
$n \geq 1$.
Finally, since $U_{0}^{(n)}=n^{-1}X_{0}$, it follows that
$U_{0}^{(n)} \to 0$ a.s. as $n\to\infty$,
which yields $U_{0}^{(n)} \xrightarrow{\mathcal{D}} 0$ as $n \to \infty$.

We now verify conditions (a)--(c) of Theorem A.1, which can be expressed in the following forms: for each fixed $T>0$ as $n \to \infty$
\begin{equation}\label{eq15}
\sup_{t\in[0,T]}
\left|
\frac{1}{n}\sum_{k=1}^{[nt]}\mathbf{E}(M_k\mid \mathfrak{F}_{k-1})-0
\right|
\xrightarrow{\mathbf{P}} 0,
\end{equation}
\begin{equation}\label{eq16}
\sup_{t\in[0,T]}
\left|
\frac{1}{n^{2}}\sum_{k=1}^{[nt]}\mathbf{E}(M_{k}^{2}\mid \mathfrak{F}_{k-1})
-
\int_{0}^{t} \sigma^{2} \bigl(Y_{n}^{(1)}(s)+\lambda s\bigr)^{+}\,ds
\right|
\xrightarrow{\mathbf{P}} 0,
\end{equation}
\begin{equation}\label{eq17}
\text{for any } \varepsilon > 0, \qquad
L_{n}(\varepsilon):= \frac{1}{n^{2}}\sum_{k=1}^{[nT]}
\mathbf{E}\left(M_{k}^{2} \, \mathbf{1}{\{|M_{k}| > n\varepsilon\}} \Big| \mathfrak{F}_{k-1} \right)
\xrightarrow{\mathbf{P}} 0,
\end{equation}
We begin with verifying condition \eqref{eq15}. Condition \eqref{eq15} holds trivially for each $T>0$. This follows from the fact that
$\left\{M_{k}, k\geq 1\right\}$ is a martingale difference sequence with respect to
$\left\{\mathfrak{F}_{k}, k\geq 0\right\}$ so that
$\mathbf{E}(M_{k}\mid\mathfrak{F}_{k-1})=0$, $k \ge 1$.

Now we turn to the proof of \eqref{eq16}. Observe that
$$
Y_{n}^{(1)}(s)+A_{n}(s)=\frac{X_{[ns]}}{n} \ge 0,
$$
which yields
$$
\left(Y_{n}^{(1)}(s)+A_{n}(s)\right)^{+} = Y_{n}^{(1)}(s)+A_{n}(s).
$$
Hence
$$
\int_{0}^{t} \sigma^{2} \bigl(Y_{n}^{(1)}+A_{n}(s)\bigr)\,\mathrm{d}s
=\frac{\sigma^2}{n}
\int_{0}^{t} X_{[ns]}\,\mathrm{d}s
=
\frac{\sigma^2}{n^{2}}\sum_{k=0}^{[nt]-1}X_{k}
+
\frac{\sigma^{2}}{n^2}(nt-[nt])X_{[nt]}.
$$
In view of \eqref{eq8}, it follows that
$$
\frac{1}{n^{2}}\sum_{k=1}^{[nt]}\mathbf{E}(M_{k}^{2} \mid \mathfrak{F}_{k-1})
=
\frac{\sigma^{2}}{n^{2}}\sum_{k=1}^{[nt]}X_{k-1}
+
\frac{1}{n^{2}}\sum_{k=1}^{[nt]}\nu_{k}.
$$
%For each $s\geq 0$ and $n \geq 1$, we have
%$$
%M_{n}(u)+\lambda u
%=
%\frac{1}{n}X_{\lfloor nu \rfloor}
%-\left(
%\frac{1}{n}\sum_{i=1}^{[nt]}\eta_{i}-%\lambda u\right).
%$$
Therefore,
$$
\frac{1}{n^{2}}\sum_{k=1}^{[nt]}\mathbf{E}(M_{k}^{2}\mid \mathfrak{F}_{k-1})
-
\int_{0}^{t} \sigma^{2}\bigl(Y_{n}^{(1)}(s)+A_{n}(s)\bigr)\,\mathrm{d}s
$$
$$
=
\frac{1}{n^{2}}\sum_{k=1}^{[nt]}\nu_{k}
-
\frac{\sigma^{2}}{n^{2}}(nt-[nt])X_{[nt]}.
$$
From here, we have an upper bound
$$
\sup_{t \in [0,T]}
\left|
\frac{1}{n^{2}}\sum_{k=1}^{[nt]}\mathbf{E}(M_{k}^{2}\mid \mathfrak{F}_{k-1})
-
\int_{0}^{t} \sigma^{2}\bigl(Y_{n}^{(1)}(s)+A_{n}(s)\bigr)\,\mathrm{d}s
\right|
$$
\begin{equation}\label{eq18}
\le
\frac{1}{n^{2}}\sum_{k=1}^{[nT]}\nu_{k}
+
\frac{\sigma^{2}}{n^{2}}\sup_{t\in[0,T]}X_{[nt]}.
\end{equation}
For the first term of the right-hand side of \eqref{eq18}, note that
$
\mathbf{E}(\nu_{k})\le \mathbf{E}(\varepsilon_{1}^{2})<\infty,
$ which yields
$$
\mathbf{E}\left(\frac{1}{n^{2}}\sum_{k=1}^{[nT]}\nu_{k}\right)
\le
\frac{[nT]}{n^{2}}\mathbf{E}(\varepsilon_{1}^{2}) \to 0, \ \ n\to \infty.
$$
From Chebyshev inequality, we deduce
\begin{equation}\label{eq19}
\frac{1}{n^{2}}\sum_{k=1}^{[nT]}\nu_{k} \xrightarrow{\mathbf P} 0, \ \ n\to \infty.
\end{equation}
For the second term, we use the recursion $X_{i}=X_{i-1}+M_{i}+\eta_{i}$, $i\geq 1$,
and summing them over $i=1$ to $k$, one has that
$$
X_{k}=X_{0}+\sum_{j=1}^{k} M_{j}+A_{k}(1).
$$
Hence, for each $t \geq 0$ and $n \geq 1$,
\begin{equation}\label{eq20}
X_{[nt]}
= \left|X_{[nt]}\right|
\leq X_{0} + \sum_{j=1}^{[nt]} |M_{j}|
+ A_{n}(t).
\end{equation}
Using the obvious bound
$$
\frac{1}{n^{2}}\sup_{t\in[0,T]}
\left|\sum_{j=1}^{[nt]} M_{j}\right|
\leq
\frac{1}{n^{2}}\sum_{j=1}^{[nT]} |M_{j}|
$$
and then Lemma 4, we have
$$
\mathbf{E}\left(
\frac{1}{n^{2}}\sum_{j=1}^{[nT]} |M_{j}|
\right)
=
\frac{1}{n^{2}}\sum_{j=1}^{[nT]} \mathbf{E}M_{j}^{2}
\leq \frac{C}{\sqrt{n}} \to 0,
\qquad n \to \infty.
$$
Due to the Chebyshev inequality, we may claim
\begin{equation}\label{eq21}
\frac{1}{n^{2}}\sup_{t\in[0,T]}
\left|\sum_{j=1}^{[nt]} M_{j}\right| \xrightarrow{\mathbf P} 0.
\end{equation}
Next, by virtue of \eqref{eq5}, it follows
\begin{equation}\label{eq22}
\frac{1}{n^{2}}\sup_{t\in [0,T]}A_{n}(t) \leq \frac{1}{n^{2}}\sup_{t\in [0,T]}\left| A_{n}(t)-\lambda t\right|+ \frac{\lambda T}{n^{2}} \to 0, \ \ n\to \infty.
\end{equation}
Hence, combining \eqref{eq20}, \eqref{eq21} and \eqref{eq22}, we arrive at
\begin{equation}\label{eq23}
\frac{1}{n^{2}}\sup_{t \in [0,T]} X_{[nt]} \xrightarrow{\mathbf P} 0, \ \ n\to \infty.
\end{equation}
Thus, from \eqref{eq18}, \eqref{eq19} and \eqref{eq23}, it follows
\begin{equation}\label{eq24}
\sup_{t\in[0,T]}
\left|
\frac{1}{n^{2}}\sum_{k=1}^{[nt]}\mathbf{E}(M_{k}^{2}\mid \mathfrak{F}_{k-1})
-
\int_{0}^{t} \sigma^{2}\bigl(Y_{n}^{(1)}(s)+A_{n}(s)\bigr)\,\mathrm{d}s
\right|
\xrightarrow{\mathbf P}0.
\end{equation}
On the other hand,
$$
\left|
\int_{0}^{t} \sigma^{2}\bigl(Y_{n}^{(1)}(s)+A_n(s)\bigr)\,\mathrm{d}s
-
\int_{0}^{t} \sigma^{2}\bigl(Y_{n}^{(1)}(s)+\lambda s\bigr)^{+}\,\mathrm{d}s
\right|
\le
\sigma^{2}\int_{0}^{t} |A_{n}(s)-\lambda s|\,\mathrm{d}s,
$$
hence,
$$
\sup_{t\in[0,T]}
\left|
\int_{0}^{t} \sigma^{2}\bigl(Y_{n}^{(1)}(s)+A_n(s)\bigr)\,\mathrm{d}s
-
\int_{0}^{t} \sigma^{2}\bigl(Y_{n}^{(1)}(s)+\lambda s\bigr)^{+}\,\mathrm{d}s
\right|
\le
\sigma^{2} T \sup_{s\in[0,T]}|A_{n}(s)-\lambda s|.
$$
From \eqref{eq5}, we infer
\begin{equation}\label{eq25}
\sup_{t\in[0,T]}
\left|
\int_{0}^{t} \sigma^{2}\bigl(Y_{n}^{(1)}(s)+A_{n}(s)\bigr)\,\mathrm{d}s
-
\int_{0}^{t} \sigma^{2}\bigl(Y_{n}^{(1)}(s)+\lambda s\bigr)^{+}\,\mathrm{d}s
\right|
\xrightarrow{\mathbf{P}}0.
\end{equation}
Finally, \eqref{eq16} follows from \eqref{eq24} and \eqref{eq25}.

Now we check condition \eqref{eq17}. Using \eqref{eq2*} and \eqref{eq3} and the elementary inequality
$ (x+y)^{2} \leq 2(x^{2}+y^{2})$, $x,y \in\mathbb{R}$,
we derive
\begin{equation}\label{eq26}
L_n(\varepsilon)
\leq 2\bigl(L_{n,1}(\varepsilon)+L_{n,2}(\varepsilon)\bigr),
\end{equation}
where
$$
L_{n,1}(\varepsilon):
=
\frac{1}{n^{2}}
\sum_{k=1}^{[nT]}
\mathbf{E}\left(T_{k}^{2}
\mathbf{1}{\{|M_k|>\varepsilon n\}}
\mid \mathfrak{F}_{k-1}
\right),
$$
$$
L_{n,2}(\varepsilon):
=
\frac{1}{n^{2}}
\sum_{k=1}^{[nT]}
\mathbf{E}\left(
N_{k}^{2}
\mathbf{1}{\{|M_{k}|>\varepsilon n\}}
\mid \mathfrak{F}_{k-1}
\right).
$$

We first consider $L_{n,2}(\varepsilon)$. Note that
$L_{n,2}(\varepsilon) \geq 0$ a.s. Observe that
$$
\mathbf{E}L_{n,2}(\varepsilon)
\leq
\frac{4}{n^2}
\sum_{k=1}^{[nT]}
b^{2}\leq \frac{4b^{2}[nT]}{n^{2}}
\to 0, \ \ n\to \infty,
$$
which implies
\begin{equation}\label{eq27}
L_{n,2}(\varepsilon)
\xrightarrow{\mathbf{P}} 0,
\qquad n\to\infty.
\end{equation}
Now consider $L_{n,1}(\varepsilon)$. Applying the obvious inequality
$$
\mathbf{1}{\{|\xi+\eta|> \varepsilon\}}
\leq
\mathbf{1}{\{|\xi|> \varepsilon/2\}}
+
\mathbf{1}{\{|\eta|>\varepsilon/2\}},
$$
which holds for any random variables $\xi$ and $\eta$ and any
$\varepsilon>0$, and considering (7), we have
\begin{equation}\label{eq28}
L_{n,1}(\varepsilon)
\leq
L_{n,1}^{(1)}(\varepsilon)
+
L_{n,1}^{(2)}(\varepsilon),
\end{equation}
where
$$
L_{n,1}^{(1)}(\varepsilon):
=
\frac{1}{n^2}
\sum_{k=1}^{[nT]}
\mathbf{E}\left(
T_k^{2}
\mathbf{1}{\left\{|T_k|>\varepsilon n/2\right\}}
\mid \mathfrak{F}_{k-1}
\right),
$$
$$
L_{n,1}^{(2)}(\varepsilon):
=
\frac{1}{n^{2}}
\sum_{k=1}^{[nT]}
\mathbf{E}\left(
T_k^{2}
\mathbf{1}{\left\{|N_{k}|> \varepsilon n/2\right\}}
\mid \mathfrak{F}_{k-1}
\right).
$$
Conditionally on $\mathfrak{F}_{k-1}$, the random variable $T_k$ is a centered sum of
$X_{k-1}$ i.i.d. copies of $\xi_{1,1}-1$. Therefore, the same argument as in the proof
of the asymptotic negligibility of $L_{n,1}(\varepsilon)$ in \cite{BBP21} remains applicable in the
dependent immigration case. Indeed, the proof argument relies only on the i.i.d. structure of the
offspring variables and on the $\mathfrak{F}_{k-1}$-measurability of $X_{k-1}$ with bounds from Lemma 4.

Hence, for any $\varepsilon>0$,
\begin{equation}\label{eq29}
L_{n,1}^{(1)}(\varepsilon) \xrightarrow[]{\mathbf{P}}0, \ \ n\to \infty.
\end{equation}
Now consider $L_{n,1}^{(2)}(\varepsilon)$.
We now establish that
\begin{equation}\label{eq30}
L_{n,1}^{(2)}(\varepsilon) \xrightarrow{\mathbf{P}} 0,
\qquad n\to\infty.
\end{equation}
Since the families $\{\xi_{k,i}\}$ and $\{\varepsilon_{k}\}$ are mutually
independent, then the random variables
$T_{k}^{2}$
and $\mathbf 1{\{|N_{k}|>\varepsilon n/2\}}$
are conditionally independent with respect to $\mathfrak F_{k-1}$.

Therefore,
$$
\mathbf{E}\!\left(
T_{k}^{2}\mathbf{1} {\{|N_{k}|>\varepsilon n/2\}}
\,\middle|\,\mathfrak{F}_{k-1}
\right)
=
\mathbf{E}(T_{k}^{2}\mid \mathfrak{F}_{k-1})
\mathbf E\!\left(
\mathbf 1{\{|N_{k}|>\varepsilon n/2\}}
\,\middle|\,\mathfrak{F}_{k-1}
\right).
$$
Using the identity
$
\mathbf{E}(T_{k}^{2}\mid \mathfrak{F}_{k-1})=\sigma^{2}X_{k-1}
$, we get
$$
\mathbf{E}\!\left(
T_{k}^{2} \mathbf{1}{\{|N_{k}|>\varepsilon n/2\}}
\,\middle|\,\mathfrak{F}_{k-1}
\right)
=
\sigma^{2}X_{k-1}\,
\mathbf{P}\!\left(|N_{k}|>\varepsilon n/2\,\middle|\,\mathfrak{F}_{k-1}\right).
$$
Hence, applying the Cauchy--Schwarz inequality, we deduce
$$
\mathbf{E} L_{n,1}^{(2)}(\varepsilon)
=
\frac{\sigma^{2}}{n^2}\sum_{k=1}^{[nT]}
\mathbf{E}\!\left(
X_{k-1}\,
\mathbf{P}\!\left(|N_{k}|>\varepsilon n/2\,\middle|\,\mathfrak F_{k-1}\right)
\right).
$$
$$
\leq \frac{\sigma^{2}}{n^{2}}\sum_{k=1}^{[nT]}
\bigl(\mathbf{E} X_{k-1}^{2}\bigr)^{1/2}
\left(
\mathbf{P}\!\left(|N_{k}|>\varepsilon n/2\right)
\right)^{1/2}.
$$
Taking into account that the variables $N_{k}, k\geq 1$ are stationary, we have
$$
\mathbf{E}L_{n,1}^{(2)}(\varepsilon)
\le
\frac{\sigma^{2}}{n^2}
\left(
\mathbf{P}\!\left(|N_{1}|>\varepsilon n/2\right)
\right)^{1/2}
\sum_{k=1}^{[nT]}
\bigl(\mathbf{E} X_{k-1}^{2}\bigr)^{1/2}.
$$
It is clear that
\begin{equation}\label{eq31}
\mathbf{P}\!\left(|N_{1}| > \varepsilon n/2 \right) \to 0, \ \ n\to \infty.
\end{equation}
From Lemma 4, we know that $\mathbf{E} X_{k-1}^{2}=O(k^{2})$, hence,
$$
\sum_{k=1}^{[nT]}
\bigl(\mathbf{E} X_{k-1}^{2}\bigr)^{1/2}=O(n^{2}), \ \ n\to \infty,
$$
and in view of \eqref{eq31},
$$
\mathbf{E} L_{n,1}^{(2)}(\varepsilon)
\le
C
\left(
\mathbf P\!\left(|N_{1}|>\varepsilon n/2\right)
\right)^{1/2} \to 0, \ \ n\to \infty
$$
which ends the proof of \eqref{eq30}. Collecting \eqref{eq28} and \eqref{eq29} together with \eqref{eq30}, we arrive at
\begin{equation}\label{eq32}
L_{n,1}(\varepsilon) \xrightarrow{\mathbf{P}} 0, \ \ n\to \infty.
\end{equation}
Overall, combining \eqref{eq26} and \eqref{eq27} with \eqref{eq32}, we verify the validity of \eqref{eq17}.
\end{proof}

Denote
$Y_{n}^{(2)}:=\{Y_{n}^{(2)}(t)\}$, $Y_{n}^{(3)}:=\{Y_{n}^{(3)}(t)\}$, $t \in [0,T]$.
Furthermore, let
$
\mathbf{0}=\{0,\ t\in[0,T]\}
$
be the process with zero trajectories on $[0,T]$. Without loss of
generality, we assume that these processes are defined on the same probability
space.

It is worth recalling that convergence in distribution of stochastic processes
in the space $D[0,\infty)$ is equivalent to convergence in distribution in
the space $D[0,T]$ for every $T>0$; see Theorem 16.7 in \cite{B99}. This property will used in the subsequent analysis. Furthermore, we denote by $d_{\infty}^{o}$ the metric on
$D[0,\infty)$ which defined by (16.4) in \cite{B99}.

\begin{proposition}
Under the assumptions of Theorem~1, it holds that
\begin{equation}\label{eq33}
Y_{n}^{(2)} \xrightarrow{\mathcal{D}} \mathbf{0},
\qquad n \to \infty.
\end{equation}
\end{proposition}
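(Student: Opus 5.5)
Since the limit $\mathbf{0}$ is deterministic and continuous, convergence in distribution in $D[0,T]$ for every $T>0$ (hence in $D[0,\infty)$ by Theorem 16.7 in \cite{B99}) reduces to uniform convergence in probability:
$$
\sup_{t\in[0,T]}|Y_{n}^{(2)}(t)| \xrightarrow{\mathbf P} 0,\qquad n\to\infty .
$$
Indeed, the Skorokhod distance to the zero function is dominated by the uniform norm. So the plan is to prove this uniform statement for each fixed $T>0$.

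The key observation is that $N_{k}=\varepsilon_{k}-\eta_{k}$ is a martingale difference sequence with respect to $\{\mathfrak{F}_{k}\}$. It is $\mathfrak F_k$-measurable and satisfies $\mathbf E(N_k\mid\mathfrak F_{k-1})=0$ by the definition of $\eta_k$. Moreover,
$$
\mathbf E N_k^2=\mathbf E\nu_k\le b^2 .
$$
The martingale $\sum_{k\le m}N_k$ therefore has orthogonal increments. Doob's $\mathbf L_2$-maximal inequality gives
$$
\mathbf E\Bigl(\max_{1\le m\le [nT]}\Bigl|\sum_{k=1}^{m}N_k\Bigr|^2\Bigr)\le 4\sum_{k=1}^{[nT]}\mathbf E N_k^2\le 4b^2 nT .
$$
Since $\sup_{t\in[0,T]}|Y^{(2)}_n(t)|=n^{-1}\max_{m\le [nT]}|\sum_{k\le m}N_k|$, Chebyshev's inequality yields
$$
\mathbf P\Bigl(\sup_{t\in[0,T]}|Y_n^{(2)}(t)|>\delta\Bigr)\le \frac{4b^2T}{n\delta^2}\to0 .
$$

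As an alternative route, one could use Lemma \ref{lem2} ($A_n(t)\to\lambda t$ uniformly a.s.) together with Birkhoff's theorem applied to $\varepsilon_k$. Their difference $n^{-1}\sum_{k\le[nt]}(\varepsilon_k-\eta_k)$ then tends to $0$ uniformly a.s., by the same maximal argument used in the proof of Lemma \ref{lem2}. The martingale bound is cleaner and even gives a rate, so I would use it.

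The only point needing care is the identity $\mathbf E N_k^2=\mathbf E\nu_k\le \operatorname{Var}(\varepsilon_1)$. It follows from $\mathbf E\nu_k=\mathbf E\varepsilon_k^2-\mathbf E\eta_k^2\le \mathbf E\varepsilon_1^2-\lambda^2=b^2$, by Jensen's inequality for $\mathbf E\eta_k^2$. The other small point is justifying that convergence to the deterministic continuous limit $\mathbf 0$ follows from uniform convergence in probability. Neither should present a real obstacle.
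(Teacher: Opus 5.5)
Your proposal is correct and is essentially the paper's proof. Both arguments use that $N_k=\varepsilon_k-\eta_k$ is a martingale difference sequence for $\{\mathfrak{F}_k\}$, bound $\max_{m\le[nT]}|\sum_{k\le m}N_k|$ through Doob's $\mathbf{L}_2$ maximal inequality together with $\mathbf{E}N_k^2\le b^2$, and reduce Skorokhod convergence to the continuous limit $\mathbf{0}$ to uniform convergence in probability on each $[0,T]$. Your Jensen argument for $\mathbf{E}N_k^2=\mathbf{E}\nu_k\le b^2$ is cleaner than the paper's appeal to stationarity of $N_k$. That stationarity is not needed, and it fails in general: $\mathfrak{F}_{k-1}$ contains only $\varepsilon_1,\dots,\varepsilon_{k-1}$, so $\eta_1=\lambda$ is constant while $\eta_2=\mathbf{E}(\varepsilon_2\mid\varepsilon_1)$ need not be. For the same reason you are right to prefer the martingale route over your alternative through Lemma~2, which relies on that stationarity.
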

\begin{proof}
First, we establish that
\begin{equation}\label{eq34}
\sup_{t\in \left[0,T\right]} \left| Y_{n}^{\left(2\right)}\left(t\right) \right|\mathop{\to}\limits^{\mathbf{L}^{2}} 0, \ \ n \to \infty.
\end{equation}
Observe that the sequence $\left\{N_{k}, k \geq 1\right\}$ is a sequence of strictly stationary martingale differences with respect to the filtration $\mathfrak{F}_{k}$. Hence by Doob's maximal $\mathbf{L}_{p}$-inequality it suffices to show that
$$ \frac{1}{n}\left|\sum_{k=1}^{\left[nT \right]}{N_{k}} \right| \mathop{\to}\limits^{\mathbf{L}^{2}} 0, \ \ n\to \infty. $$
Clearly, one can suppose that $T=1$. The application of Burkholder inequality for martingales and taking into account Jensen's inequality together with stationarity of $N_{k}$, gives us
$$ \frac{1}{n^{2}}\mathbf{E}\left(\sum_{k=1}^{n}{N_{k}} \right)^{2} \leq \frac{1}{n^{2}}\sum_{k=1}^{n}\mathbf{E}{N_{k}^{2}} \leq \frac{4}{n}b^{2} \to 0, \ \ n\to \infty $$
which ends the proof \eqref{eq34}.

To complete the proof, it remains to recall the well-known (see \cite{B99}) fact that for any
sequence of elements of $D[0,\infty)$, the uniform convergence coincides with Skorokhod convergence when the limiting
function of that sequence is continuous on $[0,\infty)$. Hence,
$$
d_{\infty}^{o}\bigl(Y_{n}^{(2)},\mathbf{0}\bigr)
=
\sup_{t \in [0,T]}
\frac{1}{n}
\left|
\sum_{k=1}^{[nt]}N_{k}
\right|.
$$
From \eqref{eq34}, it follows that
$$
Y_{n}^{(2)} \xrightarrow{\mathbf{P}} \mathbf{0}, \ \ n\to \infty,
$$
which is equivalent to \eqref{eq33}. This ends the proof.
\end{proof}

\begin{proposition}
Under the assumptions of Theorem~1, it holds that
$$
Y_{n}^{(3)} \xrightarrow{\mathcal{D}} \mathbf{0},
\qquad n \to \infty.
$$
\end{proposition}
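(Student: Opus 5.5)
We follow the same scheme as in the proof of Proposition 2: first show uniform convergence in $\mathbf{L}^2$ on compacts, then pass to weak convergence in the Skorokhod topology. Concretely, we will show that for each fixed $T>0$
$$
\sup_{t\in[0,T]}\bigl|Y_{n}^{(3)}(t)\bigr|=\frac{1}{n}\max_{0\le j\le [nT]}|S_{j}| \xrightarrow{\mathbf{L}^{2}} 0, \qquad n\to\infty .
$$
The identity holds because $Y_{n}^{(3)}(t)=n^{-1}S_{[nt]}$ is a step function, so its supremum over $[0,T]$ is a maximum of partial sums. Here $S_{j}=\sum_{i=1}^{j}\zeta_{i}$ with $\zeta_{i}=\varepsilon_{i}-\lambda$ centered, strictly stationary and adapted to $\{\mathcal{H}_{i}\}$.

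Unlike $N_k$, the $\zeta_k$ are not martingale differences, so Doob's inequality is unavailable. Instead we invoke the Peligrad--Utev maximal inequality (Theorem A.2 with $p=2$), which was already used in the proof of Lemma 4:
$$
\Bigl\|\max_{1\le i\le m}|S_{i}|\Bigr\|_{2}\le C\sqrt{m}\,\delta_{m,2},
$$
applied with $m=[nT]$. Dividing by $n$ gives
$$
\Bigl\|\sup_{t\in[0,T]}|Y_{n}^{(3)}(t)|\Bigr\|_{2}\le \frac{C\sqrt{[nT]}\,\delta_{[nT],2}}{n}\le C\sqrt{T}\,\frac{\delta_{[nT],2}}{\sqrt{n}} .
$$
By the hypothesis $\delta_{m,2}=o(\sqrt{m})$, we have $\delta_{[nT],2}=o(\sqrt{nT})$, so the right-hand side tends to $0$. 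The condition $\delta_{n,2}=o(\sqrt n)$ enters exactly here, and the reduction to the maximal inequality is the crux; everything else is routine.

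To conclude, we argue as in Proposition 2. The limit $\mathbf{0}$ is continuous, so Skorokhod convergence to it coincides with local uniform convergence. Hence $d^{o}_{\infty}(Y_{n}^{(3)},\mathbf{0})$ is controlled by $\sum_{T\ge1}2^{-T}\min\bigl(1,\sup_{t\le T}|Y_n^{(3)}(t)|\bigr)$, which tends to $0$ in probability by the previous step and dominated convergence. Thus $Y_{n}^{(3)}\xrightarrow{\mathbf{P}}\mathbf{0}$, which implies $Y_{n}^{(3)}\xrightarrow{\mathcal{D}}\mathbf{0}$.

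One technical point is that Theorem A.2 is stated for the two-sided stationary extension $\widetilde{\varepsilon}$ with filtration $\mathcal{H}_k$. Since $(\varepsilon_1,\varepsilon_2,\dots)\stackrel{\mathcal D}{=}(\widetilde\varepsilon_1,\widetilde\varepsilon_2,\dots)$, the law of $\max_{i\le m}|S_i|$ is the same for both sequences, so the bound transfers to the original immigration sequence.
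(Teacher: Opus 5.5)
Your proof is correct and follows the paper's route: apply the Peligrad--Utev inequality (Theorem A.2 with $p=2$) to the partial sums of $\zeta_k$ with $m=[nT]$, use $\delta_{n,2}=o(\sqrt n)$, and pass from local uniform $\mathbf{L}^2$-convergence to convergence in $d^{o}_{\infty}$. Your bound $d^{o}_{\infty}\le\sum_m 2^{-m}\min(1,\sup_{t\le m}|Y_n^{(3)}(t)|)$ is cleaner than the paper's identity for $d^{o}_{\infty}$.

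One correction: Theorem A.2 actually reads $\|\max_{i\le m}|S_i|\|_2\le C\sqrt{m}\,(\|\zeta_1\|_2+80\,\delta_{m,2})$. The term $\|\zeta_1\|_2$ cannot be dropped: for martingale differences $\delta_{m,2}=0$, and the bound without it would force $S_i=0$. The extra term only contributes $C\sqrt{T}\,b/\sqrt{n}\to 0$, so your conclusion stands.
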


\begin{proof}
We first show that
\begin{equation}\label{eq35}
 \sup_{t\in \left[0,T\right]} \left| Y_{n}^{\left(3\right)}\left(t\right)  \right| \mathop{\to}\limits^{\mathbf{L}^{2}} 0, \ \ n \to \infty.
\end{equation}
Applying Theorem A.2 to the $\zeta_{k}$s, we have
$$ \left\| \frac{1}{n}\max_{1\leq i\leq \left[nT\right]}\left|\sum_{k=1}^{i}\zeta_{k}\right|\right\|_{2} \leq
\frac{C}{\sqrt{n}}\left(\left\|\zeta_{1} \right\|_{2}+80 \delta_{n,2}\right)  \to 0, \ \ n\to \infty. $$
Hence, for any fixed $T>0$, as $n \to \infty$,
$$
d_{\infty}^{o}\bigl(Y_{n}^{(3)}, \mathbf{0}\bigr)
=
\sup_{t\in[0,T]}
\frac{1}{n}
\left|
\sum_{k=1}^{[nt]}\zeta_{k}
\right|
\xrightarrow{\mathbf{L}^2}0.
$$
Using the same arguments as in the proof of Proposition~2,
we get the desired convergence.
\end{proof}

Now, relying on the auxiliary results above, one can verify the validity of
Theorem~1.

\begin{proof}[Proof of Theorem~1]
To conclude Theorem 1, it remains to establish \eqref{eq1*}. Recall that
$$
Y_{n}(t)=Y_{n}^{(1)}(t)+A_{n}(t),
\qquad t \ge 0.
$$
Since the the process $A_n$ is random in the dependent immigration case, thus one can not
treat $Y_n$ as the image of $Y_n^{(1)}$ under a deterministic mapping on
$D[0,\infty)$ as in the independent case. Therefore, instead of working with a random
mapping, we consider the deterministic map
$\Phi: D[0,\infty) \times D[0,\infty) \to D[0,\infty)$,
defined by
$$
\Phi(f,g):=f+g.
$$
From \eqref{eq11} and \eqref{eq5}, we know that
$$
Y_{n}^{(1)} \xrightarrow[]{\mathcal{D}}\widetilde Y,
 \ \
A_{n}\xrightarrow[]{\mathbf{P}} A(\cdot),
$$
where $A(t)=\lambda t$. Then, applying Slutsky's theorem, we derive
$$
(Y_{n}^{(1)}, A_n)\xrightarrow[]{\mathcal{D}}(\widetilde Y, A(\cdot)),\ \ n\to \infty,
$$
in $D[0,\infty)\times D[0,\infty)$.
Since $A(\cdot)$ is continuous then the mapping $\Phi$ is continuous at
$(\widetilde Y, A(\cdot))$ a.s. Hence, by continuous mapping theorem,
$$
Y_{n}=\Phi(Y_{n}^{(1)}, A_n)\xrightarrow[]{\mathcal{D}}\Phi(\widetilde Y, A(\cdot)), \ \ n\to \infty.
$$
Consequently,
$$
Y_{n} \xrightarrow[]{\mathcal{D}}Y, \ \ Y(t)=\widetilde Y(t)+\lambda t,
$$
By It\^o's formula, the process $Y$ is the pathwise unique strong solution of
$$
\mathrm{d}Y(t)=\lambda\,dt+\sigma\sqrt{Y(t)^{+}}\,\mathrm{d}W(t),
\qquad Y(0)=0.
$$
Consequently, Theorem~\ref{thm1} follows from a direct application of Propositions~2--3 using Slutsky's theorem.
Thus, Theorem~\ref{thm1} is completely proved.
\end{proof}

\section*{Appendix}
In the proofs we need the following result about convergence of random step
processes towards a diffusion process.
\setcounter{theorem}{0}
\renewcommand{\thetheorem}{A.\arabic{theorem}}
\begin{theorem}[Corollary 2.2 in~\cite{IP10}]
Let $\beta: [0,\infty)\times \mathbb{R} \to \mathbb{R}$ and $\gamma: [0,\infty)\times \mathbb{R} \to \mathbb{R}$ be continuous functions.
Assume that uniqueness in the sense of probability law holds for the SDE
\begin{equation}\label{eq35}
\mathrm{d}\mathcal{U}_{t}=\beta\left(t, \mathcal{U}_{t}\right)\mathrm{d}t+\gamma\left(t, \mathcal{U}_{t}\right)\mathrm{d}W\left(t\right), \ \ t \geq 0,
\end{equation}
with initial condition $\mathcal{U}_{0}=u_{0}$ for all $u_{0}\in \mathbb{R}$, where $W\left(t\right)$ is a standard Wiener
process. Let $\left\{\mathcal{U}_{t}, t \geq 0 \right\}$ be a solution of \eqref{eq35} with initial value $\mathcal{U}_{0}=0$.

For each $n \geq 1$, let $\left\{U_{k}^{\left(n\right)} \right\}$ be a sequence of real-valued random variables adapted to
a filtration $\left\{\mathfrak{F}_{k}^{\left(n\right)}, k \geq 0 \right\}$ such that $\mathbf{E}\left(U_{k}^{\left(n\right)} \right)^{2}<\infty$ for each $n,k \geq 1$. Let
$$ \mathcal{U}_{t}^{\left(n\right)}=\sum_{k=0}^{\left[nt \right]}U_{k}^{\left(n\right)}, \ \ t \geq 0,\ \ n \geq 1. $$
Suppose that as $n\to \infty$,
\begin{align*}
\text{(a)}\quad
&\sup_{t\in [0,T]} \left| \sum_{k=1}^{[nt]} \mathbf{E}\Bigl(U_{k}^{(n)} \big|\ \mathfrak{F}_{k-1}^{(n)} \Bigr)
- \int_{0}^{t} \beta\bigl(s, \mathcal{U}_{s}^{(n)}\bigr) \, \mathrm{d}s \right|
\xrightarrow{\mathbf{P}} 0, \\[6pt]
\text{(b)}\quad
&\sup_{t\in [0,T]} \left| \sum_{k=1}^{[nt]} \operatorname{Var}\Bigl(U_k^{(n)} \big|\ \mathfrak{F}_{k-1}^{(n)} \Bigr)
- \int_{0}^{t} \bigl(\gamma\bigl(s, \mathcal{U}_s^{(n)}\bigr)\bigr)^{2} \, \mathrm{d}s \right|
\xrightarrow{\mathbf{P}} 0, \\[8pt]
\text{(c)}\quad
&\sum_{k=1}^{[nT]} \mathbf{E} \Biggl((U_{k}^{(n)})^{2} \mathbf{1}{\{|U_{k}^{(n)}| > \varepsilon\}}
\Bigg|\ \mathfrak{F}_{k-1}^{(n)} \Biggr)
\xrightarrow{\mathbf{P}} 0.
\end{align*}
Then
$$ \mathcal{U}^{\left(n\right)} \mathop{\to}\limits^{\mathcal{D}} \mathcal{U}, \ \ n\to \infty. $$
\end{theorem}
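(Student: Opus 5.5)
The plan is the martingale-problem route (as in Jacod--Shiryaev, Theorem IX.3.39, or Ethier--Kurtz, Chapter~7): establish tightness of $\{\mathcal{U}^{(n)}\}$ in $D[0,\infty)$, show that every limit point solves the martingale problem associated with \eqref{eq35}, and conclude by uniqueness in law. We regard $\mathcal{U}^{(n)}$ as a pure-jump semimartingale with respect to the time-changed filtration $\mathfrak{F}^{(n)}_{[nt]}$, jumping by $U_k^{(n)}$ at times $k/n$. Its predictable characteristics are:
\begin{itemize}
\item the drift $B^{(n)}(t)=\sum_{k=1}^{[nt]}\mathbf{E}(U_k^{(n)}\mid\mathfrak{F}_{k-1}^{(n)})$;
\item the conditional quadratic variation $C^{(n)}(t)=\sum_{k=1}^{[nt]}\operatorname{Var}(U_k^{(n)}\mid\mathfrak{F}_{k-1}^{(n)})$;
\item the jump compensator $\nu^{(n)}$, built from the conditional laws of $U_k^{(n)}$.
\end{itemize}
Throughout I use that the initial term $U_0^{(n)}$ converges in distribution to the initial value $0$; this is needed for the conclusion and is implicit in the statement.

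\textbf{Step 1: truncation.} The Lindeberg condition (c) allows replacing $U_k^{(n)}$ by its truncation $U_k^{(n)}\mathbf{1}\{|U_k^{(n)}|\le 1\}$, recentred conditionally. This changes $B^{(n)}$ and $C^{(n)}$ only by quantities that vanish in probability uniformly on $[0,T]$. Indeed,
$$\Bigl|\mathbf{E}\bigl(U\mathbf{1}\{|U|>1\}\mid\cdot\bigr)\Bigr|\le \mathbf{E}\bigl(U^2\mathbf{1}\{|U|>1\}\mid\cdot\bigr),$$
and the same bound controls the change in the conditional variances. After truncation, (c) also gives $\sup_{t\le T}|\Delta\mathcal{U}^{(n)}(t)|\to 0$ in probability, since the probability of some jump exceeding $\varepsilon$ is bounded by the Lindeberg sum divided by $\varepsilon^2$. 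Hence any limit point is continuous.

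\textbf{Step 2: tightness, the main obstacle.} The difficulty is that $\beta$ and $\gamma$ are only continuous, not bounded, so (a) and (b) do not directly give uniform control of the increments of $B^{(n)}$ and $C^{(n)}$. I would localize with the stopping times $\tau_n^K=\inf\{t:|\mathcal{U}^{(n)}(t)|\ge K\}$. On $[0,T\wedge\tau_n^K]$ the integrands $\beta(s,\mathcal{U}^{(n)}_s)$ and $\gamma^2(s,\mathcal{U}^{(n)}_s)$ are bounded by $c_K=\sup_{s\le T,|x|\le K+1}(|\beta|+\gamma^2)$. Conditions (a) and (b) then give, up to $o_{\mathbf{P}}(1)$ errors, Lipschitz control with constant $c_K$ of the increments of $B^{(n)}$ and $C^{(n)}$. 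Aldous' criterion then yields tightness of the stopped processes. To remove the localization, I first need $\lim_{K\to\infty}\limsup_n\mathbf{P}(\tau_n^K\le T)=0$. I would try to get this from Lenglart's inequality applied to the stopped martingale part together with the Lipschitz drift bound, giving a Gronwall-type stochastic boundedness. If that fails, I would use the standard device of the Jacod--Shiryaev theorem: pass to limits of the stopped processes for each fixed $K$, and use uniqueness for each $K$ to identify those limits.

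\textbf{Step 3: identification and conclusion.} Take $f\in C_c^2(\mathbb{R})$ and expand $f(\mathcal{U}^{(n)}_t)-f(\mathcal{U}^{(n)}_0)$ by second-order Taylor expansion over the jumps. The increments of the first-order terms are compensated by $B^{(n)}$, and those of the second-order terms by $C^{(n)}$, up to remainders that the continuity modulus of $f''$ and (c) control. Combined with (a) and (b) this shows that
$$f(\mathcal{U}^{(n)}_t)-f(\mathcal{U}^{(n)}_0)-\int_0^t\Bigl(\beta f'+\tfrac12\gamma^2 f''\Bigr)(s,\mathcal{U}^{(n)}_s)\,\mathrm{d}s$$
is a martingale up to an error tending to $0$ in $\mathbf{L}^1$, uniformly on $[0,T]$. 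By continuity of $\beta,\gamma$ and of the limit paths, this property passes to any weak limit point. So every limit point solves the martingale problem for $\bigl(\beta\partial_x+\tfrac12\gamma^2\partial_x^2\bigr)$ with initial law $\delta_0$. By the equivalence between martingale problems and weak solutions of \eqref{eq35}, together with the assumed uniqueness in law, all limit points coincide with the law of $\mathcal{U}$, which gives $\mathcal{U}^{(n)}\xrightarrow{\mathcal{D}}\mathcal{U}$.
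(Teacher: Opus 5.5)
The paper does not prove this statement. It quotes it as Corollary~2.2 of Isp\'any and Pap, who obtain it from the Jacod--Shiryaev limit theorems for semimartingales, so your characteristics/martingale-problem route is the same one. You are also right that $U_0^{(n)}\xrightarrow{\mathcal D}0$ must be added as a hypothesis; the paper does check it when applying the result. Without it the statement is false: take $\beta=\gamma=0$, $U_0^{(n)}=1$ and $U_k^{(n)}=0$ for $k\ge 1$; everything else holds but $\mathcal U^{(n)}\equiv 1$.

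As a proof, however, your sketch has two genuine gaps. The first is removing the localization in Step~2. Since $\beta,\gamma$ are merely continuous, $c_K$ may grow arbitrarily fast in $K$. Lenglart plus Gronwall then gives $\lim_K\limsup_n\mathbf P(\tau_n^K\le T)=0$ only under growth conditions on $\beta,\gamma$ (e.g.\ linear growth). In the stated generality this bound has to be extracted from the limit, not from a priori estimates. Your fallback (``use uniqueness for each $K$'') is exactly where the missing idea sits. A limit of the stopped processes solves the SDE only up to the exit time from $(-K,K)$, while the hypothesis gives uniqueness only for global solutions. What you need is \emph{local uniqueness}, i.e.\ uniqueness in law for the stopped martingale problem. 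It is derived from uniqueness for \emph{every} initial value $u_0$ by a pasting/conditioning argument (Stroock--Varadhan, Jacod--Shiryaev). Tellingly, your argument never uses the ``for all $u_0$'' part of the hypothesis, which is what it is there for. You then still have to transfer ``$\mathcal U$ does not leave $(-K+1,K-1)$ before $T$ with high probability'' back to $\mathcal U^{(n)}$. This needs care because exit times are not continuous functionals on $D[0,\infty)$ (e.g.\ choose the levels outside a countable exceptional set).

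The second gap is that the identification mechanism in Step~3 is wrong as stated. The second-order Taylor terms are compensated by $\sum_k\mathbf E((U_k^{(n)})^2\mid\mathfrak F_{k-1}^{(n)})=C^{(n)}+\sum_k(\mathbf E(U_k^{(n)}\mid\mathfrak F_{k-1}^{(n)}))^2$, not by $C^{(n)}$. Moreover, (a) controls $B^{(n)}$ only in the sup-norm, not in total variation. So $\sum_k f'(\mathcal U^{(n)}_{(k-1)/n})\,\mathbf E(U_k^{(n)}\mid\mathfrak F_{k-1}^{(n)})$ need not be close to $\int_0^t(\beta f')(s,\mathcal U^{(n)}_s)\,\mathrm ds$. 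For a counterexample, take $\beta=\gamma=0$, the trivial filtration, $U_0^{(n)}=0$ and $U_k^{(n)}=(-1)^{k+1}n^{-1/2}$. Then (a)--(c) hold and $\mathcal U^{(n)}\to 0$. Yet the drift sum tends to $-\tfrac t2 f''(0)$, and the second-order sum tends to $\tfrac t2 f''(0)$ although $C^{(n)}\equiv 0$; only their total vanishes.

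There are two standard repairs. One is the Jacod--Shiryaev formulation: $M^{(n)}=\mathcal U^{(n)}-\mathcal U^{(n)}_0-B^{(n)}$ and $(M^{(n)})^2-C^{(n)}$ are martingales. By (a) and (b), every limit point then makes $\mathcal U-\mathcal U_0-\int_0^{\cdot}\beta(s,\mathcal U_s)\,\mathrm ds$ a continuous local martingale with quadratic variation $\int_0^{\cdot}\gamma^2(s,\mathcal U_s)\,\mathrm ds$. Hence it is a weak solution, and uniqueness in law applies. The other is to run your Taylor argument on $\mathcal U^{(n)}_0+\int_0^{\cdot}\beta(s,\mathcal U^{(n)}_s)\,\mathrm ds+M^{(n)}$. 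This process is uniformly close to $\mathcal U^{(n)}$ by (a), and its drift has bounded variation after localization. For the nondecreasing $C^{(n)}$, sup-norm closeness is then enough. In both versions, passing the martingale property to the limit also needs the localization, since (a)--(c) hold only in probability and give no $\mathbf L^1$ control.
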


Let
$\mathbb{T}:\Omega\to\Omega$ be a bijective bi-measurable transformation preserving
the probability. Let $\mathcal{F}_{0}$ be a $\sigma$-algebra of $\mathcal{A}$
satisfying
$
\mathcal{F}_{0}\subseteq \mathbb{T}^{-1}(\mathcal{F}_{0})
$,
and define the nondecreasing filtration $\left\{\mathcal{F}_{i}, i\in\mathbb{Z} \right\}$ by
$\mathcal{F}_{i}=\mathbb{T}^{-i}(\mathcal{F}_{0})$.
Let $\gamma_{0}$ be an $\mathcal{F}_{0}$-measurable centered real random variable.
Define the strictly stationary sequence $\left\{\gamma_{i}, i \in \mathbb{Z}\right\}$ by
$\gamma_{i}=\gamma_{0}\circ \mathbb{T}^{i}$,
and let $S_{n}=\sum_{k=1}^{n}\gamma_{k}$.

In our proofs, the key ingredient is the following sharp Burkholder-type maximal inequality in $\mathbf{L}_{p}$ for a class of stationary sequences that includes martingale sequences, mixingales, and other dependent structures.
\setcounter{theorem}{1}
\renewcommand{\thetheorem}{A.\arabic{theorem}}
\begin{theorem}[Theorem~1 in~\cite{PU05}]
Assume that $\mathbf{E}(|\gamma_{1}|^{p})<\infty$ for some $p\geq 2$. Then, for all $n\geq 1$,
$$
\left\| \max_{1\leq i\leq n}|S_{i}|\right\|_{p}
\leq C_{p}^{1/p} n^{1/2}
\left(\|\gamma_{1}\|_{p}+80\delta_{n,p} \right).
$$
\end{theorem}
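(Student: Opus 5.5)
The statement to prove is Theorem A.2, the Peligrad--Utev maximal inequality. The natural route is the dyadic martingale-approximation scheme of Peligrad and Utev, proved by induction on $n$.

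\emph{Reduction to dyadic $n$.} First note that the right-hand side is essentially monotone in $n$. If the bound holds for $n=2^{r}$ with a slightly adjusted constant, it extends to all $n$, since $n\le 2^{r}<2n$ changes $n^{1/2}$ only by a factor $\sqrt2$. Also $\delta_{2^r,p}$ is comparable to $\delta_{n,p}$, because $\|\mathbf{E}(S_k\mid\mathcal F_0)\|_p$ is subadditive in $k$. This extra factor is absorbed into $C_p$.

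\emph{Key decomposition.} For $n=2^{r}$, set $S_n=\mathbf{E}(S_n\mid\mathcal F_0)+(S_n-\mathbf{E}(S_n\mid\mathcal F_0))$. Then group the sequence into consecutive blocks of length $2^{j}$. Let $Y^{(j)}_i$ be the block sums, and introduce
\[
D^{(j)}_i=Y^{(j)}_i-\mathbf{E}\bigl(Y^{(j)}_i\mid\mathcal F_{(i-1)2^j}\bigr),
\qquad
R^{(j)}_i=\mathbf{E}\bigl(Y^{(j)}_i\mid\mathcal F_{(i-1)2^j}\bigr).
\]
By stationarity, $\|R^{(j)}_i\|_p=\|\mathbf{E}(S_{2^j}\mid\mathcal F_0)\|_p=:a_j$. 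The $D^{(j)}_i$ form a stationary martingale difference sequence. The partial sums then split as follows:
\begin{itemize}
\item at the finest level $j=0$, into a martingale of increments $\gamma_k-\mathbf{E}(\gamma_k\mid\mathcal F_{k-1})$;
\item plus a sequence $\mathbf{E}(\gamma_k\mid\mathcal F_{k-1})$.
\end{itemize}
The latter is re-paired into blocks of size $2$, and the same split is iterated up to level $r$. This is the standard ``halving'' recursion.

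\emph{Bounding each level.} Doob's and Burkholder's inequalities give
\[
\bigl\|\max_i\bigl|\textstyle\sum D^{(j)}\bigr|\bigr\|_p\le C_p (n2^{-j})^{1/2}\,\|D^{(j)}_1\|_p,
\qquad
\|D^{(j)}_1\|_p\le 2a_j.
\]
For the maximum over non-block endpoints, I control the within-block remainder by the inductive hypothesis applied at scale $2^{j}$. Summing over $j$ yields
\[
C_p n^{1/2}\Bigl(\|\gamma_1\|_p+c\sum_{j\le r}2^{-j/2}a_j\Bigr).
\]
The dyadic sum is compared to $\delta_{n,p}$ via $a_j\le 2^{-?}$-averages: using subadditivity, $\sum_j 2^{-j/2}a_j\le c\sum_{k\le n}k^{-3/2}\|\mathbf{E}(S_k\mid\mathcal F_0)\|_p$. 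This is the Peligrad--Utev dyadic-to-full comparison lemma, and it produces the numerical constant ($80$).

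\emph{Main obstacle.} The hard step is closing the induction with a constant that does not grow with $r$. The maximum over all $i\le n$, rather than just over block endpoints, reintroduces the same maximal quantity at half the scale. I would handle this as follows:
\begin{itemize}
\item write $\max_{i\le 2m}|S_i|\le \max_{l\le m}|S_{2l}|+\max_l|\gamma_{2l+1}|$-type terms;
\item bound $\max_l|S_{2l}|$ by applying the hypothesis to the stationary sequence of pair sums $\gamma_{2l-1}+\gamma_{2l}$ with respect to $\mathcal F_{2l}$.
\end{itemize}
The contraction factor $2^{-1/2}$ from the rescaling makes the recursion summable, which fixes $C_p$ and the constant $80$.
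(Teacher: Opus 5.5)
The paper does not prove this inequality; it quotes it from Peligrad--Utev. So your sketch has to stand on its own, and it has a genuine gap at exactly the step you flag as the main obstacle. Your reduction to dyadic $n$ and the subadditivity comparison of $\sum_j 2^{-j/2}a_j$ with $\delta_{n,p}$ are fine, and your key-decomposition paragraph points the right way. The trouble is how you resolve the obstacle.

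Applying the inductive hypothesis to the pair sums $\gamma_{2l-1}+\gamma_{2l}$ of the \emph{original} sequence gives no contraction. The hypothesis returns $K m^{1/2}\|\gamma_1+\gamma_2\|_p$. For a martingale difference sequence, where $\delta_{n,p}=0$, one has $\|\gamma_1+\gamma_2\|_2=\sqrt2\,\|\gamma_1\|_2$. For Rademacher variables and $p>2$ it is even worse: $\|\gamma_1+\gamma_2\|_p=2^{1-1/p}\|\gamma_1\|_p>\sqrt2\,\|\gamma_1\|_p$.
\begin{itemize}
\item The factor $2^{-1/2}$ is therefore used up entirely.
\item The odd-index term $\max_l|\gamma_{2l+1}|$, whose $\mathbf{L}_2$-norm can be of order $\sqrt m\,\|\gamma_1\|_2$, comes on top, so closing the induction would require $K+2^{-1/2}\le K$.
\item Unrolled, your scheme is the classical chaining bound $\sum_j (2^{r-j})^{1/2}\|S_{2^j}\|_2$, which is of order $\sqrt n\log n$.
\end{itemize}
Your phrase ``control the within-block remainder by the inductive hypothesis at scale $2^j$'' fails for the same reason. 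A maximum over $2^{r-j}$ blocks, each bounded by $K2^{j/2}(\cdots)$, costs $(2^{r-j})^{1/p}K2^{j/2}$, and for $p=2$ this alone is the whole target $K\sqrt n(\cdots)$.

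There is also an error in a definition. With $D^{(j)}_i=Y^{(j)}_i-\mathbf{E}(Y^{(j)}_i\mid\mathcal F_{(i-1)2^j})$, the bound $\|D^{(j)}_1\|_p\le 2a_j$ is false: for martingale differences $a_j=0$, yet $D^{(j)}_1=S_{2^j}$.

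The missing idea is that only the projected part may be re-paired, and once you do that no induction is needed. The martingale differences your halving actually produces are $\widetilde D^{(0)}_k=\gamma_k-R^{(0)}_k$ and, for $j\ge1$, $\widetilde D^{(j)}_i=R^{(j-1)}_{2i-1}+R^{(j-1)}_{2i}-R^{(j)}_i$. These are martingale differences for $(\mathcal F_{i2^j})_i$. Since $R^{(j-1)}_1$ is $\mathcal F_0$-measurable, $\widetilde D^{(j)}_1=R^{(j-1)}_2-\mathbf{E}(R^{(j-1)}_2\mid\mathcal F_0)$, hence $\|\widetilde D^{(j)}_1\|_p\le 2a_{j-1}$.

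Write $k_j=\lfloor k/2^j\rfloor$ and iterate
$\sum_{l\le k_j}R^{(j)}_l=\sum_{i\le k_{j+1}}(\widetilde D^{(j+1)}_i+R^{(j+1)}_i)+\mathbf{1}\{k_j \text{ odd}\}R^{(j)}_{k_j}$.
For every $k\le n$ with $2^{r-1}\le n<2^r$ this gives
\[ S_k=\sum_{j=0}^{r-1}\sum_{i\le k_j}\widetilde D^{(j)}_i+\sum_{j=0}^{r-1}\mathbf{1}\{k_j \text{ odd}\}R^{(j)}_{k_j}. \]
So at a non-dyadic index the only error per level is a single projected block. Because $p\ge2$, $\|\max_{i\le n2^{-j}}|R^{(j)}_i|\|_p\le (n2^{-j})^{1/p}a_j\le (n2^{-j})^{1/2}a_j$. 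Combined with Doob and Burkholder at each level, this yields the bound $\sqrt n\,(2C\|\gamma_1\|_p+C'\sum_{j<r}2^{-j/2}a_j)$, and your comparison lemma then finishes the proof.

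If you prefer an induction, first remove $\sum_k(\gamma_k-\mathbf{E}(\gamma_k\mid\mathcal F_{k-1}))$. Then apply the hypothesis to pairs of $\gamma'_k=\mathbf{E}(\gamma_k\mid\mathcal F_{k-1})$ with respect to $\mathcal F_{2l-1}$. Now $\|\gamma'_1+\gamma'_2\|_p\le 2a_0$, and the new conditional terms are $\|\mathbf{E}(S_{2^{j+1}}\mid\mathcal F_{-1})\|_p\le a_{j+1}$. The factor $2^{-1/2}$ then genuinely shifts the dyadic weights, and the constants close.
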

We also need the following version of the continuous mapping theorem.
\begin{applemma}[Theorem~3.27 in~\cite{K97}]
Let $(\mathcal{X}, d_\mathcal{X})$ and $(\mathcal{Y}, d_\mathcal{Y})$ be metric spaces, and let
$\left\{\xi_{n}, n\geq 1\right\}$ and $\xi$ be random elements with values in $\mathcal{X}$
such that
$
\xi_n \xrightarrow{\mathcal{D}} \xi$ as $n\to\infty$.
Let $f:\mathcal{X} \to \mathcal{Y}$ and $f_{n}: \mathcal{X}\to \mathcal{Y}$, $n \geq 1$, be measurable mappings,
and let $C\in\mathcal{B}(\mathcal{X})$ be such that
$
\mathbf{P}(\xi\in C)=1.
$
Assume that
$$
\lim_{n\to\infty} d_\mathcal{Y}\bigl(f_{n}(s_{n}), f(s) \bigr)=0
$$
whenever
$$
\lim_{n\to\infty} d_\mathcal{X}(s_{n},s)=0, \qquad s\in C,\quad s_{n} \in \mathcal{X},\ n \geq 1.
$$
Then
$$
f_{n}(\xi_{n})\xrightarrow{\mathcal{D}} f(\xi), \ \  n \to \infty.
$$
\end{applemma}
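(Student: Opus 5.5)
The statement immediately above is Lemma~A.1, the extended continuous mapping theorem (Theorem~3.27 in \cite{K97}). I would prove it by reducing to the ordinary continuous mapping theorem through Skorokhod's representation, with a measure-theoretic detour if the state space is not separable.

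The first step assumes $\mathcal{X}$ is separable, which covers the use in the paper since $D[0,\infty)$ with the Skorokhod metric is Polish. Since $\xi_n \xrightarrow{\mathcal D} \xi$, Skorokhod's representation theorem gives a probability space carrying $\widetilde\xi_n \stackrel{\mathcal D}{=} \xi_n$ and $\widetilde\xi \stackrel{\mathcal D}{=} \xi$ with $d_{\mathcal X}(\widetilde\xi_n,\widetilde\xi)\to 0$ almost surely. Because $\mathbf P(\widetilde\xi\in C)=\mathbf P(\xi\in C)=1$, for almost every $\omega$ we have $\widetilde\xi(\omega)\in C$ and $\widetilde\xi_n(\omega)\to\widetilde\xi(\omega)$. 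The hypothesis, applied with $s_n=\widetilde\xi_n(\omega)$ and $s=\widetilde\xi(\omega)$, then gives $f_n(\widetilde\xi_n(\omega))\to f(\widetilde\xi(\omega))$ for such $\omega$.

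The second step passes to the images. Measurability of $f_n$ and $f$ makes $f_n(\widetilde\xi_n)$ and $f(\widetilde\xi)$ random elements of $\mathcal Y$. Almost sure convergence implies convergence in distribution: for bounded continuous $h$ on $\mathcal Y$, $\mathbf E\,h(f_n(\widetilde\xi_n))\to\mathbf E\,h(f(\widetilde\xi))$ by dominated convergence. Since $f_n(\widetilde\xi_n)\stackrel{\mathcal D}{=} f_n(\xi_n)$ and $f(\widetilde\xi)\stackrel{\mathcal D}{=} f(\xi)$, the conclusion $f_n(\xi_n)\xrightarrow{\mathcal D} f(\xi)$ follows.

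The main obstacle is the general metric space case, where Skorokhod representation may fail. For that case I would use the Portmanteau theorem directly. For closed $F\subseteq\mathcal Y$, set
$$G_m:=\bigcup_{k\ge m} f_k^{-1}(F),\qquad \limsup_{n\to\infty}\mathbf P(f_n(\xi_n)\in F)\le \limsup_{n\to\infty}\mathbf P\bigl(\xi_n\in \overline{G_m}\bigr)\le \mathbf P\bigl(\xi\in \overline{G_m}\bigr),$$
where the last inequality holds for each fixed $m$ by Portmanteau applied to $\xi_n\to\xi$. Next, $\bigcap_m \overline{G_m}\cap C\subseteq f^{-1}(F)$. Indeed, if $s\in C$ lies in every $\overline{G_m}$, one can pick $s_j\to s$ with $s_j\in f_{k_j}^{-1}(F)$ and $k_j\to\infty$. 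Filling the remaining indices with $s$ gives a sequence converging to $s$, so the hypothesis yields $f_{k_j}(s_j)\to f(s)$, and closedness of $F$ gives $f(s)\in F$. Letting $m\to\infty$ and using $\mathbf P(\xi\in C)=1$ then gives $\limsup_n \mathbf P(f_n(\xi_n)\in F)\le\mathbf P(f(\xi)\in F)$.

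One gap remains in this route: the sets $\overline{G_m}$ are closed, hence Borel, but $G_m$ itself need not be, so the first inequality should be read with outer probability or via measurable closures. This is handled by working throughout with the closed sets $\overline{G_m}$, as above.
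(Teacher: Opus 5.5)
Your proof is correct. The paper does not prove Lemma~A.1 itself; it only quotes Theorem~3.27 of \cite{K97}. The proof given there is essentially your second (general metric space) argument in complemented form. For open $G\subseteq\mathcal{Y}$ one shows $f^{-1}(G)\cap C\subseteq\bigcup_{m}\bigl(\bigcap_{k\ge m}f_k^{-1}(G)\bigr)^{\circ}$ and applies the open-set version of the Portmanteau theorem. With $F=\mathcal{Y}\setminus G$, the complement of $\bigl(\bigcap_{k\ge m}f_k^{-1}(G)\bigr)^{\circ}$ is exactly your $\overline{G_m}$.

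Your Skorokhod-coupling argument is a genuinely different route. It is shorter and reduces everything to pointwise convergence plus dominated convergence, but it needs $\mathcal{X}$ separable (or at least $\xi$ with separable support). That suffices for every application in the paper, since $D[0,\infty)$ and $D[0,\infty)\times D[0,\infty)$ are Polish. The Portmanteau argument, by contrast, needs no separability at all, so once you have it the first step is redundant.

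There are two small points to fix in the second route. First, the ``gap'' you flag at the end does not exist. $G_m=\bigcup_{k\ge m}f_k^{-1}(F)$ is a countable union of Borel sets (each $f_k$ is measurable and $F$ is closed), hence Borel. In any case your first inequality only uses the inclusion $f_n^{-1}(F)\subseteq\overline{G_m}$ for $n\ge m$, not the measurability of $G_m$. Second, in the diagonal step you should choose the indices strictly increasing. For example, pick $s_j\in G_{m_j}$ with $m_j>k_{j-1}$ and $d_{\mathcal{X}}(s_j,s)<1/j$, so that $s_j\in f_{k_j}^{-1}(F)$ for some $k_j\ge m_j$. Then filling the remaining indices with $s$ really produces a single sequence $(t_n)$ with $t_{k_j}=s_j$ and $t_n\to s$, to which the hypothesis applies.
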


%We prove fulfilment of condition (27) for the sum in (24) with $U_{k}^{\left(n\right)}=M_{k}/n$ and $\mathfrak{F}_{k}^{\left(n\right)}$ for all $n \geq 1$.
%If your paper includes appendices, then precede the first of them by the command
\appendix
%and then carry on using the \section and \subsection commands, as above.

%%%%%%%%%%%%%%%%%%%%%%%%%%%%%%%%%%%%%%%%%%%%%%%%%%%%%%%%%%%%%%%%%%%%%%%%%%%%%%%%%%%%%%%%%%%%%%%%%%%%%%%%%%
\end{document}